%% file: main.tex
\documentclass[final]{siamart251216}

\usepackage{amsmath,amsfonts,amssymb}
\usepackage{mathtools}
\usepackage{dsfont}
\usepackage[svgnames]{xcolor}
\usepackage{graphicx}
\usepackage[shortlabels]{enumitem}
\usepackage{caption}
\usepackage{subcaption}
\usepackage{tikz}
\usepackage{microtype}
\usepackage{url}
\usepackage{comment}

\input{commands}

\newtheorem{remark}[theorem]{Remark}

\newcommand{\ddt}{\tfrac{\text{\normalfont d}}{\text{\normalfont d}t}}

\headers{Formation Control with Prescribed Performance}{Schaa, Charitidou, Dimarogonas, and Berger}

\title{Distance-Based Formation Control with Prescribed Performance for Higher-Order Multi-Agent Systems\thanks{Funded by the Deutsche Forschungsgemeinschaft (DFG, German Research Foundation) -- Project-ID 524064985.}}
\author{Janina Schaa\thanks{Institut für Mathematik, Martin-Luther-Universität Halle-Wittenberg, Halle (Saale), Germany (\email{\{janina.schaa,thomas.berger\}@mathematik.uni-halle.de}).} \and Maria Charitidou\thanks{Independent Researcher, Thessaloniki, Greece.} \and Dimos V. Dimarogonas\thanks{Division of Decision and Control Systems, KTH Royal Institute of Technology, Stockholm, SE-100 44, Sweden (\email{dimos@kth.se}).} \and Thomas Berger\footnotemark[2]}

\begin{document}

\maketitle

\begin{abstract}
We consider distributed distance-based formation control with prescribed transient performance for multi-agent systems modeled by unknown nonlinear dynamics of relative degree greater than one. We introduce a virtual leader whose trajectory is tracked with prescribed transient behavior by a designated subset of agents. The undirected communication graph is either a tree graph or a minimally and infinitesimally rigid graph. In the latter case, we further consider the objective of centroid tracking. In each setting, a distributed and model-free control law is developed, and we establish the satisfaction of the prescribed funnel constraints on the formation and tracking errors, and boundedness of all closed-loop signals. Numerical simulations illustrate the effectiveness of the proposed control laws.
\end{abstract}

\begin{keywords}
adaptive control, prescribed performance control, formation control, multi-agent systems
\end{keywords}

\begin{MSCcodes}
93C10, 93C40
\end{MSCcodes}

\section{Introduction}
Over the past decades multi-agent systems \cite{multi_agent_control_survey} have received increasing attention due to their inherent robustness, scalability and the improved task performance they offer compared to their single-agent counterpart. Of particular interest for many applications is the problem of formation control, where agents need to reach a desired formation using limited information obtained either via communication with their neighboring agents or through sensing.

Formation control has been a well studied topic of research as evident from the rich number of review papers in the field including but not limited to \cite{formation_survey2, formation_survey1,formation_survey3}. Depending on the application a formation can be expressed in terms of relative-states or relative-distances among agents \cite{formation_survey2} or more recently in terms of bearings \cite{bearing_form} to account for line-of-sight constraints induced by agents equipped with visual sensors. Existing approaches consider various types of dynamical systems such as linear \cite{relative_form_orient_align},  non-holonomic \cite{distance_form_unicycles} and Lagrangian \cite{formation_lagrangian} systems. Furthermore, several graph topologies were studied including static, time-varying and switching (un)directed graphs \cite{formation_switching,affine_formation} allowing apart from periodic for potentially intermittent \cite{formation_intermittent} or delayed \cite{formation_delays} information exchange among agents.

The focus of this work is on distance-based formation control, where pairs of agents need to asymptotically approach and maintain a desired pre-determined distance that may differ among pairs of agents. In literature distance-based formation control has been studied for rigid \cite{Asimow2,roth_rigid_formation, rigid_control1, rigid_control2} and non-rigid graphs \cite{dimos_formation1,dimos_formation2}. In rigidity-based frameworks the goal is for the agents to asymptotically converge to a desired geometric shape determined by the graph under consideration and the desired relative-distances. In \cite{Asimow2,roth_rigid_formation} conditions ensuring that a formation is rigid are established and a minimum number of edges ensuring formation rigidity is determined. Given the nonlinear nature of the control objectives under consideration, potential-based control laws were designed in \cite{rigid_control1} ensuring local asymptotic stability to the desired formation while in \cite{rigid_control2} a global stability analysis of the resulting closed-loop system is presented and algebraic conditions are derived ensuring instability of all undesired equilibria. In \cite{dimos_formation1} it was shown that a necessary and sufficient condition for global asymptotic stability to a distance-based formation configuration for general undirected graphs is the graph to be a spanning tree. In \cite{dimos_formation2} the role of the cycles in the closed-loop system's equilibria  was further explored considering the control law designed in \cite{dimos_formation1} and the distance-based formation control problem for non-holonomic agents and tree communication graphs was studied.

In the vast majority of these works asymptotic convergence to the desired formation often implies that agents remain at place after initially reaching the desired formation (in absence of disturbances). In many real world applications though the multi-agent team may need to reach a formation while individual members or the team as a whole needs to satisfy a secondary objective such as asymptotically reaching a desired goal region or following a predetermined trajectory as for example in multi-agent transportation tasks or in environmental monitoring. This problem, known as the formation-tracking problem  \cite{fortr1, fortr2, fortr4,fortr5, fortr6} has received considerably less attention in the context of distance-based formation control due to its increased complexity. In particular, as shown in \cite{fortr5} a formation tracking error is always present  due to the tracking objective unless agents receive or estimate reference state information. In order to account for this error, \cite{fortr2} proposed a proportional-integral control law  where integral action was considered to ensure zero formation error while in \cite{fortr6} an estimator was introduced for the follower agents in the team to determine the state of the reference trajectory. A distance-based formation control problem with centroid tracking was considered in \cite{fortr4} and an estimator was introduced to ensure finite convergence to the team's centroid while satisfaction of the desired formation and tracking objectives was achieved without explicit knowledge of reference trajectory's speed bounds. In \cite{fortr1} distributed control laws were designed for time-varying distance-based formation tracking of leader-follower systems and sufficient conditions on the initial conditions were derived for ensuring asymptotic convergence.

An important limitation of the aforementioned work is that both the transient and steady-state performance of the multi-agent system depends on fine-tuning relevant control gains and thus, is a byproduct of the design rather than part of the specifications. Funnel control is a high-gain control framework that guarantees pre-determined transient and steady state performance of a desired signal by enforcing the signal to remain within a pre-specified funnel determined by a positive time-varying function. It was initially introduced in \cite{funnel_init} for reference tracking of nonlinear functional systems of relative degree one and has been later extended to account for higher-order nonlinear systems as well as various objectives including synchronization \cite{funnel_consensus_trenn, funnel_consensus} and distributed optimization \cite{dis_opt}. Nevertheless, to the best our knowledge it has not been considered for formation control problems. Transient and steady-state constraints for distance-based formation control problems have been enforced via prescribed performance control \cite{ppc}, a control framework close to funnel control that guarantees funnel constraint satisfaction by means of a nonlinear transformation with desirable monotonicity properties. The problem of distance-based formation control for rigid bodies with spanning tree communication graphs was studied in \cite{alex} while \cite{dfppc2} investigated the problem of rigid-based formation control with centroid trajectory tracking for single-integrator systems. In \cite{dfppc1,dfppc4} a leader-follower formation tracking control scheme was introduced for UAVs and underwater robots, respectively. In \cite{dfppc3} a rigidity-based formation control law was proposed for nonholonomic systems ensuring fixed-time convergence to the desired formation using barrier-Lyapunov functions. In \cite{distance_formation1} the problem of distance-based formation tracking for nonholonomic vehicles was explored under a path communication graph while in \cite{maria} a distance-based formation tracking problem was considered for relative degree one nonlinear systems considering an undirected tree graph and limited reference trajectory information. Nevertheless, to the best of our knowledge the problem of distance-based formation (tracking) has not been studied for general nonlinear systems of relative degree $r\geq 2$.

To that end, in this work we consider the problem of funnel-based distance-based formation tracking for higher-order nonlinear systems under connectivity constraints, where funnel constraints are imposed in the formation objective. In particular, we study the following three problems under funnel constraints: 1) distance-based formation tracking for graphs with tree structure with a virtual leader, where funnel constraints are considered also for the tracking objective, 2) formation control for rigid frameworks with a funnel-constrained virtual leader tracking objective and 3) formation control for rigid frameworks with unconstrained centroid tracking. First, an undirected tree topology is considered with a single agent in the team assigned to track a virtual leader with bounded but unknown dynamics under connectivity maintenance constraints. Then, the problem of rigid-based formation tracking control is considered given an infinitesimal rigid undirected static communication network and similar assumptions on the virtual leader. Finally a rigid-based formation control problem is considered that is enriched with a centroid tracking objective that needs to be achieved asymptotically. A distributed model-free control law is designed for each of the problems ensuring the satisfaction of the funnel constraints at all times as well as boundedness of all closed-loop signals.

The remainder of the paper is organized as follows: Section~\ref{sec:Preliminaries} summarizes preliminaries on graph theory and funnel control and states the problem formulation. Section~\ref{sec:tree} introduces the funnel-based formation-tracking problem for an undirected tree graph topology, Section~\ref{sec:rigid1} presents the funnel-based control strategy for rigidity-based distance formation and Section~\ref{sec:rigid2} deals with the rigid-based formation problem with the centroid tracking objective. Section~\ref{sec:Simulations} illustrates the efficacy of the proposed control methodologies in a numerical example and Section~\ref{sec:Conclusion} summarizes the results of the work and discusses some ideas for future research.

\paragraph{Notation}
Throughout the paper, $C^k(X,Y)$ denotes the space of $k$-times continuously differentiable functions $X \to Y$, for sets $X$ and $Y$. For a sufficiently smooth function $x$, $x^{(j)}$ denotes its $j$-th derivative. The Euclidean norm is denoted by $\Vert \cdot \Vert$ and the supremum norm by $\Vert\cdot\Vert_\infty$. For a matrix $A$, $\operatorname{rank} A$ and $\ker A$ denote its rank and kernel, respectively. For a map $f$ and a set $S$, $f(S)$ denotes the image of $S$ and $f^{-1}(S)$ the inverse image. The standard inner product on $\R^n$ is denoted by $\langle\cdot,\cdot\rangle$. The Kronecker product of matrices $A$ and $B$ is denoted by $A\otimes B$. The cardinality of a set $V$ is denoted by $\vert V \vert$, and the Cartesian product of sets $A$ and $B$ by $A\times B$.

\section{Preliminaries and Problem Formulation} \label{sec:Preliminaries}

\subsection{Funnel Control}

Funnel Control is a control framework ensuring the satisfaction of bounds on the transient behavior of the tracking error $e(t)$. For systems of higher relative degree, establishing the required funnel constraints is generally less direct. The following lemma is taken from~\cite{Berg26} and will be used in the subsequent arguments.

\begin{lemma}\label{lem:epsi}
Let $e\in C^{r-1}([0,\omega),\R^m)$, $\omega\in(0,\infty]$, and consider the signals $e_1(t) = e(t)$ and $e_{i+1}(t) = \dot e_i(t) + k_i e_i(t)$, $k_i>0$, for $i=1,\ldots,r-1$. Further let $\psi\in C^1([0,\omega),\R)$ be such that $\psi(t)>0$ and $\dot \psi(t) \ge -\alpha \psi(t)$ for all $t\in[0,\omega)$, where $0\le \alpha<\min_{i=1,\ldots,r-1} k_i$. If $\|e_r(t)\| < \psi(t)$ for all $t\in [0,\omega)$, then for all $i=1,\ldots,r-1$ and all $t\in [0,\omega)$ it holds that
\begin{equation}\label{eq:est-ki}
 \|e_i(t)\|
< \max\left\{ \left(\prod_{j=i}^{r-1} \frac{1}{k_j-\alpha}\right) , \max_{j=i,\ldots,r-1} \left(\prod_{p=r-j+i}^{r-1} \frac{1}{k_p-\alpha}\right) \frac{\|e_j(0)\|}{\psi(0)}\right\}\ \psi(t).
\end{equation}
\end{lemma}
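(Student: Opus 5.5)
We argue by backward induction on $i$, from $i=r-1$ down to $i=1$, using a comparison argument for the scalar quantity $\|e_i(t)\|/\psi(t)$. The key observation is that each relation $\dot e_i = -k_i e_i + e_{i+1}$ is a stable first-order filter driven by $e_{i+1}$. Whenever $\|e_{i+1}\|<c\,\psi$, one therefore expects $\|e_i\|\le \max\{c/(k_i-\alpha),\,\|e_i(0)\|/\psi(0)\}\,\psi$. The condition $\dot\psi\ge-\alpha\psi$ guarantees that $\psi$ decays no faster than $e^{-\alpha t}$, which is slower than the filter's rate $k_i>\alpha$.

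\emph{One-step estimate.} Let $c>0$ and suppose $\|e_{i+1}(t)\|<c\,\psi(t)$ on $[0,\omega)$. Set $\mu := \max\{c/(k_i-\alpha),\ \|e_i(0)\|/\psi(0)\}$ and $w(t):=\|e_i(t)\|/\psi(t)$. I claim $\|e_i(t)\|\le\mu\psi(t)$ for all $t$, with strict inequality whenever the first entry of the maximum is attained.

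To see this, consider $v=\tfrac12\|e_i\|^2$. Then
\[
\dot v = -k_i\|e_i\|^2+\langle e_i,e_{i+1}\rangle < -k_i\|e_i\|^2 + c\psi\|e_i\|
\]
wherever $e_i\neq0$. Now suppose $w$ exceeds $\mu$ at some time, and let $t_0$ be the last time before that with $w(t_0)=\mu$. On the subsequent interval we have $\|e_i\|>0$ and $\|e_i\|\ge \mu\psi$, so
\[
\tfrac{d}{dt}\|e_i\| < -k_i\|e_i\|+c\psi \le -(k_i-\alpha)\|e_i\| -\alpha\|e_i\| + (k_i-\alpha)\mu\psi \le -\alpha\|e_i\|.
\]
On the other hand, $\tfrac{d}{dt}(\mu\psi)\ge-\alpha\mu\psi$. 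Combining the two gives $\tfrac{d}{dt}(\|e_i\|-\mu\psi)<-\alpha(\|e_i\|-\mu\psi)\le0$ on that interval. Hence $\|e_i\|-\mu\psi$ cannot become positive, a contradiction.

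Strictness needs separate handling in two cases:
\begin{itemize}
\item If $\mu=\|e_i(0)\|/\psi(0)>c/(k_i-\alpha)$, the derivative inequality is strict, so $w$ strictly decreases at every touching time. This gives $w(t)<\mu$ for $t>0$, with equality only at $t=0$.
\item If $\mu=c/(k_i-\alpha)$, the inequality $\dot v<\dots$ comes from the strict bound on $e_{i+1}$, so touching is again excluded except possibly at $t=0$.
\end{itemize}
The value at $t=0$ is where the statement's strict ``$<$'' requires care. If $e_i(0)$ attains the maximum, I would check whether the intended reading permits equality at $t=0$, or else argue strictness via the first entry of the maximum. I expect this bookkeeping, together with the nonsmoothness of $\|e_i\|$ at zero (avoided by working with $v$ or with $\|e_i\|^2$ directly), to be the main technical nuisance.

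\emph{Induction.} For $i=r-1$, apply the one-step estimate with $c=1$, using the hypothesis $\|e_r\|<\psi$. This gives $c_{r-1}=\max\{1/(k_{r-1}-\alpha),\ \|e_{r-1}(0)\|/\psi(0)\}$, which matches \eqref{eq:est-ki}. In general, feed $c_{i+1}$ into the step for $i$:
\[
c_i=\max\Bigl\{\tfrac{c_{i+1}}{k_i-\alpha},\ \tfrac{\|e_i(0)\|}{\psi(0)}\Bigr\}.
\]
Unfolding this recursion yields a maximum of the pure product $\prod_{j=i}^{r-1}(k_j-\alpha)^{-1}$ and of terms $\|e_j(0)\|/\psi(0)$, each multiplied by a product of $j-i$ factors $1/(k_p-\alpha)$.

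These factors are $1/(k_p-\alpha)$ for $p=i,\dots,j-1$, whereas the statement indexes them as $p=r-j+i,\dots,r-1$. The two index sets coincide only up to relabeling. I would check whether the paper's indexing is intentional, or whether both products are dominated in the same way (for instance via a uniform bound). If they are not equivalent, I would prove the version with $p=i,\dots,j-1$, which is what the induction naturally produces, and note the discrepancy.
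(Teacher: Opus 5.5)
The paper quotes this lemma from the literature without proof, so there is no route to compare against. Your backward induction, driven by a one-step comparison estimate, is a sound approach, and the recursion $c_r=1$, $c_i=\max\{c_{i+1}/(k_i-\alpha),\,\|e_i(0)\|/\psi(0)\}$ is exactly right. The two points you leave open are not bookkeeping, though. Both are genuine errors in the statement as printed, so the literal statement cannot be proved. You should settle them and prove the corrected version.

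\emph{Strictness at $t=0$.} The $j=i$ entry of the maximum is $\|e_i(0)\|/\psi(0)$, since the product is empty. Whenever this entry attains the maximum, \eqref{eq:est-ki} at $t=0$ reads $\|e_i(0)\|<\|e_i(0)\|$. For example, take $r=2$, $m=1$, $\alpha=0$, $\psi\equiv1$, $k_1=1$, $e(t)=2e^{-t}$. Then $e_2\equiv0$, the right-hand side is $\max\{1,2\}=2$, and $\|e_1(0)\|=2$. So ``arguing strictness via the first entry'' is impossible. The correct conclusion is $\le$ on $[0,\omega)$ and $<$ on $(0,\omega)$.

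\emph{Index range.} The two products are not interchangeable. The part of $e_i$ generated by $e_j(0)$ passes only through the filters with gains $k_i,\dots,k_j$. For $i<j<r-1$, however, the printed product $\prod_{p=r-j+i}^{r-1}$ contains $k_{r-1}$ and can be made arbitrarily small. Concretely, take $r=4$, $m=1$, $\alpha=0$, $\psi\equiv1$, $k_1=k_2=1$, $k_3=10$, $e(t)=te^{-t}$. Then $e_2=e^{-t}$ and $e_3=e_4\equiv0$. The printed bound for $i=1$ is $\max\{\tfrac1{10},0,\tfrac1{10}\cdot1,\tfrac1{10}\cdot0\}=\tfrac1{10}$, but $e_1(1)=e^{-1}$. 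State and prove instead the version with $\prod_{p=i}^{j-1}\frac{1}{k_p-\alpha}$, which is what your recursion unfolds to; in this example it gives $c_1=1$. For the paper the error is harmless, since Step~2a only needs finite constants, but the formula for $c_{i,j}$ there inherits the wrong index range.

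\emph{A repair in your one-step estimate.} You assume $\|e_{i+1}\|<c\psi$ on all of $[0,\omega)$, but the previous induction step only delivers $\le$ at $t=0$. Equality does occur: for $e_2$ in the example above, $c_2=1=\|e_2(0)\|$. Formulate the step as follows: if $\|e_{i+1}\|\le c\psi$ on $[0,\omega)$ and $<$ on $(0,\omega)$, then $\|e_i\|\le\mu\psi$ on $[0,\omega)$ and $<$ on $(0,\omega)$. This holds regardless of which entry attains $\mu$; at $t=0$, strictness holds exactly when $\|e_i(0)\|/\psi(0)<\mu$. Your claim ``strict whenever the first entry is attained'' fails when the two entries tie.

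Your contradiction argument already works with ``$\le$''. Strictness then needs no case distinction. At a touching time $t^*>0$, the function $g=\|e_i\|-\mu\psi\le0$ attains an interior maximum $0$, yet $\dot g(t^*)\le\|e_{i+1}(t^*)\|-(k_i-\alpha)\mu\psi(t^*)\le\|e_{i+1}(t^*)\|-c\psi(t^*)<0$.

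Alternatively, use variation of constants together with $\psi(s)\le e^{\alpha(t-s)}\psi(t)$ for $s\le t$. This gives $\|e_i(t)\|\le\bigl(e^{-(k_i-\alpha)t}\frac{\|e_i(0)\|}{\psi(0)}+(1-e^{-(k_i-\alpha)t})\frac{c}{k_i-\alpha}\bigr)\psi(t)\le\mu\psi(t)$. The bracket is a convex combination, and strictness for $t>0$ comes directly from the integral term.
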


\subsection{Graphs with Tree Structure}

A simple graph $G=(V,\cE)$ is defined by its node set $V=\{1,\hdots,m\}$ and its edge set $\cE \subseteq V\times V$, where $(i,i) \notin \cE$, for all $1\leq i\leq m$. We will consider undirected graphs, hence $(i,j) \in \cE$ implies that $(j,i) \in \cE$. Choose an arbitrary partition $\cE = \cE_+ \cup \cE_-$ such that $(i,j) \in \cE_+$ if, and only if, $(j,i) \in \cE_-$. This assigns an arbitrary, but fixed direction to each edge, where $i$ is the head and $j$ is the tail of the edge $(i,j) \in \cE_+$. This yields the incidence matrix $D \in \R^{m\times \vert \cE_+\vert}$, where
\[
    \forall\, k\in V\ \forall\, (i,j)\in\cE_+:\ D_{k,(i,j)} = \begin{cases} -1, & k=j,\\ 1, & k=i,\\ 0, & \text{otherwise.}\end{cases}
\]
A graph is connected, if there exists a sequence of edges connecting any pair of distinct vertices. A connected graph on $m$ vertices with $m-1$ edges is called a tree. Observe that tree graphs do not contain any cycles. 

\subsection{Rigid Frameworks}

We will introduce the concept of rigid frameworks as in~\cite{Asimow2} and a few basic results. Again, let $G = (V,\cE)$ be a simple undirected graph on the vertices $V = \{ 1, \hdots, m \}$, and let $x_i \in \R^n$ be a point assigned to the vertex $i \in V$, and let $p = \vert \cE_+ \vert$. The stacked vector $x = (x_1^{\top},\hdots,x_m^{\top})^{\top} \in \R^{mn}$ represents a realization of $G$ in $\R^n$, denoted as the framework $\cF = (G,x)$. The rigidity function associated to the graph $G$ is defined as
\begin{equation}
\Delta_{G} : \R^{mn} \rightarrow \R^{p}, \quad \Delta_G(x) = \left( \Norm{x_{i} - x_{j}}^2 \right)_{(i,j) \in \cE_+},
\end{equation}
where $\left( \Norm{x_{i} - x_{j}}^2 \right)_{(i,j) \in \cE_+}$ denotes the column vector of the respective entries, in an arbitrary, but fixed order. The rigidity matrix $R:\R^{mn} \rightarrow \R^{p\times nm}$ associated to the graph $G$ is defined as $ R(x) = \frac{1}{2} \partial_x \Delta_G(x) $. The row of the rigidity matrix belonging to the edge $(i,j) \in \cE_+$ is therefore given by
\begin{equation}   \label{eq:RowRigidity}
 \begin{pmatrix} 0 \hdots (x_{i} - x_{j})^{\top} \hdots -(x_{i} - x_{j})^{\top} \hdots 0 \end{pmatrix},
\end{equation}
where the non-zero entries are in the $i$-th and $j$-th block of size $n$. 
\begin{definition}
A framework $\cF = (G,x)$ of $m$ vertices in $n$-dimensional space is called infinitesimally rigid if
\begin{equation} \label{eq:RankConditionAffineHull}
\operatorname{rank} R(x) = nm - \frac{(l+1)(2n-l)}{2},
\end{equation}
where $l$ is the dimension of the affine hull of the set $\{ x_1,\hdots, x_m \} \subseteq \R^n$.
\end{definition}
\begin{remark}
\cite{Asimow2} For an infinitesimally rigid framework $(G,x)$, the dimension of the affine hull of the set $\{ x_1, \hdots, x_m \}$ satisfies $l = \min \{ m-1, n\}$. Hence, the rank condition~\eqref{eq:RankConditionAffineHull} is
\begin{equation} \label{eq:RankCondition}
\operatorname{rank} R(x) = nm - \frac{n(n+1)}{2}
\end{equation}
in the case of at least $n+1$ vertices. This can be interpreted as the condition that all infinitesimal motions in the $mn$ variables of the representation will change the length of at least one of the edges of the graph, except for the $\frac{n(n+1)}{2}$ motions that are translations or rotations of the underlying $n$-dimensional space.
\end{remark}
Another notion of rigidity is as follows.
\begin{definition} 
\cite{OhAhn} A framework $\cF = (G,x)$ is called rigid, if there exists a neighborhood $U \subseteq \R^{mn}$ of $x$ such that $\Delta_G^{-1}(\Delta_G(x)) \cap U = \Delta_{\hat{G}}^{-1}(\Delta_{\hat{G}}(x)) \cap U$, where $\hat{G}$ is the complete graph on $m$ vertices. A rigid framework $\cF$ is called minimally rigid, if no edge of the graph $G$ can be removed without losing the rigidity of the framework.
\end{definition}
\begin{remark}
Observe that this definition shows that in a rigid framework, disturbing the distance between any pair of vertices will cause a change in the length of some edge of the graph $G$. In particular, controlling the lengths of the edges of $G$ will be sufficient in order to ensure that the shape of the framework is preserved, where shape refers to the relative position of all agents. 
\end{remark}
The following lemma from \cite{Asimow2} establishes a relation between the two concepts of rigidity.
\begin{lemma} \label{InfiImpliesRigid}
Infinitesimal rigidity of a framework implies its rigidity, while the converse does not hold.
\end{lemma}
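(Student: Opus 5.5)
The statement to prove is Lemma~\ref{InfiImpliesRigid}. It has two parts: infinitesimal rigidity implies rigidity, and rigidity does not imply infinitesimal rigidity. I will argue the case $m\ge n+1$ with affine hull of full dimension $n$; the lower-dimensional case is analogous, with $n$ replaced by $l$.

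For the forward implication, the tool is the constant rank theorem applied to $\Delta_G$ near $x$.

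\begin{itemize}
\item \textbf{Rank is locally constant.} Infinitesimal rigidity gives $\operatorname{rank} R(x) = nm - \frac{n(n+1)}{2}$. This is the maximal rank of $R$ on the set of configurations whose affine hull is $n$-dimensional. The reason is that $\ker R(y)$ always contains the infinitesimal Euclidean motions $\dot y_i = A y_i + b$ with $A$ skew-symmetric, and for such $y$ this space has dimension $\frac{n(n+1)}{2}$. Rank is lower semicontinuous, and full affine dimension is an open condition, so $\operatorname{rank} R(y)$ is constant on a neighborhood $U$ of $x$.
\item \textbf{Level set is a manifold of the right dimension.} By the constant rank theorem, $\Delta_G^{-1}(\Delta_G(x))\cap U$ is, after shrinking $U$, an embedded submanifold through $x$ of dimension $\frac{n(n+1)}{2}$.
\item \textbf{Comparison with the orbit.} The orbit $\mathcal{O}$ of $x$ under the Euclidean group $E(n)$, acting diagonally on $\R^{mn}$, lies in this level set. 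It is also a manifold of dimension $\frac{n(n+1)}{2}$, because the stabilizer of a configuration with $n$-dimensional affine hull is trivial. An embedded submanifold of equal dimension contained in another is open in it. After shrinking $U$ so the level set is connected near $x$, the two coincide: $\Delta_G^{-1}(\Delta_G(x))\cap U=\mathcal{O}\cap U$.
\item \textbf{Conclusion.} Every point of $\mathcal{O}$ preserves all pairwise distances, so $\mathcal{O}\cap U\subseteq \Delta_{\hat G}^{-1}(\Delta_{\hat G}(x))\cap U$. The reverse inclusion $\Delta_{\hat G}^{-1}(\Delta_{\hat G}(x))\subseteq \Delta_G^{-1}(\Delta_G(x))$ holds because $G\subseteq \hat G$. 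The two sets in the definition of rigidity are therefore equal.
\end{itemize}

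The main obstacle is the orbit comparison. One must check carefully that $E(n)$ acts locally freely and properly on configurations with full affine hull, so that its orbit is an embedded submanifold. One must also handle connectedness: a neighborhood of $x$ in the level set must equal a neighborhood of $x$ in the orbit, not merely contain it.

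For the converse, a counterexample suffices. Take $n=2$, $m=3$, with collinear points $x_1=(0,0)$, $x_2=(1,0)$, $x_3=(2,0)$ and the path edges $\{1,2\},\{2,3\}$.
\begin{itemize}
\item \textbf{Not infinitesimally rigid.} Here $l=1$, and~\eqref{eq:RankConditionAffineHull} requires $\operatorname{rank}R(x)=6-\frac{2\cdot 3}{2}=3$. However, $R(x)$ has only two rows, so its rank is at most $2$.
\item \textbf{Rigid.} Near $x$, any configuration with $\|y_1-y_2\|=\|y_2-y_3\|=1$ can bend at $y_2$, which changes $\|y_1-y_3\|$. So this path is not rigid, and a better example is needed. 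Use the complete graph on three collinear points, edges $\{1,2\},\{2,3\},\{1,3\}$ with $x$ as above. Equality in the triangle inequality forces every nearby configuration with the same three edge lengths to be collinear with the same spacing. Hence it is a congruent copy, and the framework is rigid since $G=\hat G$.
\item \textbf{Still not infinitesimally rigid.} With $l=1$ the required rank is again $3$. But the rows $(x_1-x_2,-(x_1-x_2),0)$, $(0,x_2-x_3,-(x_2-x_3))$, $(x_1-x_3,0,-(x_1-x_3))$ all have zero second coordinate in each block and are linearly dependent, so $\operatorname{rank}R(x)=2<3$.
\end{itemize}

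This shows the converse fails.
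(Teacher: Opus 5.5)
Your proof is correct. The paper does not prove this lemma; it imports it from \cite{Asimow2}. What you have written is essentially a self-contained reconstruction of the standard argument there, which runs as follows:
\begin{itemize}
\item infinitesimal rigidity means $\operatorname{rank} R(x)$ attains the largest value $nm-\frac{n(n+1)}{2}$ that the trivial motions allow;
\item lower semicontinuity then makes $x$ a point of locally constant rank;
\item the constant rank theorem makes the level set of $\Delta_G$ a manifold of dimension $\frac{n(n+1)}{2}$ near $x$;
\item this manifold must therefore agree locally with the congruence class of $x$.
\end{itemize}
The collinear realization of the complete graph on three vertices is the natural rigid but not infinitesimally rigid example. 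Your version saves the reader from consulting the reference.

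A few points can be tightened.

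\begin{itemize}
\item \textbf{The obstacle you flag largely dissolves.} Consider the orbit map $g\mapsto g\cdot x$ on a neighbourhood of the identity in $E(n)$. It takes values in the level set $M$, and its differential at the identity is $(A,b)\mapsto (Ax_i+b)_i$, which you already showed is injective. Since $\dim E(n)=\frac{n(n+1)}{2}=\dim M$, this map is a local diffeomorphism onto a neighbourhood of $x$ in $M$. Hence $M\cap U'\subseteq \mathcal O$ for some smaller open $U'$, which is all the definition of rigidity requires. Global embeddedness, properness and connectedness are not needed. If you keep the connectedness route, you must also show that $\mathcal O\cap U$ is closed in $M$. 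This holds because $\mathcal O=\Delta_{\hat G}^{-1}(\Delta_{\hat G}(x))$ is closed once reflections are included in $E(n)$.
\item \textbf{The ``lower-dimensional case'' need not be called analogous.} With $m\ge n+1$, the paper's remark already gives $l=n$. With $m\le n$, it gives $l=m-1$, and then the rank condition reads $\operatorname{rank}R(x)=\frac{m(m-1)}{2}$. Since $R(x)$ has only $p$ rows, this forces $G=\hat G$, so rigidity is immediate.
\item \textbf{Trim the counterexample.} Delete the abandoned path example. The triangle-inequality argument is also superfluous, because $G=\hat G$ makes rigidity tautological.
\end{itemize}
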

\begin{lemma} \label{Lem:PosDef}
Let $\cF = (G,x)$ be a minimally and infinitesimally rigid framework with $m\geq n+1$. Then $p = nm-\frac{n(n+1)}{2}$, and $R(x)R(x)^{\top}$ is positive definite.
\end{lemma}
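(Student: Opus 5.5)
The plan is to prove the two claims separately and to lean on the rank condition~\eqref{eq:RankCondition} together with minimality.

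\textbf{Edge count.} Since $m\ge n+1$, infinitesimal rigidity gives $\operatorname{rank} R(x) = nm - \frac{n(n+1)}{2}$. Because $R(x)$ has $p$ rows, this forces $p \ge nm-\frac{n(n+1)}{2}$.

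For the reverse inequality we use minimality, and argue by contradiction. Suppose $p > nm-\frac{n(n+1)}{2}$. Then the rows of $R(x)$ are linearly dependent, so some row, say the one for edge $(i,j)$, lies in the span of the others. Let $G'$ be the graph with this edge removed. The rigidity matrix $R'(x)$ of $(G',x)$ consists of the remaining rows of $R(x)$, so
\[
\operatorname{rank} R'(x)=\operatorname{rank} R(x)=nm-\tfrac{n(n+1)}{2}.
\]
Hence $(G',x)$ is still infinitesimally rigid, with the same $m\ge n+1$ points; the affine hull dimension condition is unchanged since it depends only on $x$. By Lemma~\ref{InfiImpliesRigid}, $(G',x)$ is rigid. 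This contradicts minimal rigidity of $\cF$, so $p = nm-\frac{n(n+1)}{2}$.

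\textbf{Positive definiteness.} Now $R(x)\in\R^{p\times nm}$ has rank exactly $p$, i.e.\ full row rank. Therefore $R(x)^\top$ has trivial kernel. For any $v\in\R^p\setminus\{0\}$,
\[
v^\top R(x)R(x)^\top v = \|R(x)^\top v\|^2>0.
\]
Since $R(x)R(x)^\top$ is clearly symmetric, it is positive definite.

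\textbf{Main obstacle.} The only delicate step is the minimality argument. It requires that deleting a linearly dependent row preserves infinitesimal rigidity, and that Lemma~\ref{InfiImpliesRigid} applies to the reduced framework. Both follow directly from the definitions, because the rank formula refers only to $m$, $n$ and the point configuration, and none of these change when an edge is removed.
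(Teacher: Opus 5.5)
Your proof is correct and follows the same route as the paper's: the rank condition gives $p \ge nm-\frac{n(n+1)}{2}$, minimality combined with Lemma~\ref{InfiImpliesRigid} rules out strict inequality, and full row rank then yields positive definiteness of $R(x)R(x)^{\top}$. The only difference is cosmetic. You delete a single linearly dependent row, whereas the paper passes to the subgraph spanned by a maximal independent set of rows. Your version matches the ``no edge can be removed'' wording of minimal rigidity slightly more directly.
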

\begin{proof}
Condition~\eqref{eq:RankCondition} shows that $p \geq nm-\frac{n(n+1)}{2}$. Assume that the strict inequality holds. Due to the infinitesimal rigidity of the framework, there is a subset of $nm-\frac{n(n+1)}{2}$ edges such that the corresponding rows of the rigidity matrix are linearly independent. This subset of edges therefore defines a subgraph $\tilde{G}$ of $G$ such that the corresponding framework $\tilde{F} = (\tilde{G},x)$ is infinitesimally rigid, and by Lemma~\ref{InfiImpliesRigid}, it is rigid. This contradicts the minimal rigidity of $G$, hence $p = mn-\frac{n(n+1)}{2}$ holds. \\
Moreover, this shows that $R(x)$ has full row rank, hence $\ker R(x)^{\top} = \{ 0 \}$. This implies positive definiteness of $R(x) R(x)^{\top}$.
\end{proof}

The following lemma will establish conditions under which we can exclude one column of the rigidity matrix, and will still obtain a positive definite quadratic form. This will relate to the fact that we are not able to apply an input to the virtual leader of the formation.

\begin{lemma}
Let $\cF = (G,x)$ be a minimally and infinitesimally rigid framework on $m\geq n+1$ nodes, and let $N_m \subseteq \{ 1,\hdots, m-1 \}$ denote the set of vertices connected to $m$. Let $R(x)$ denote the rigidity matrix of $\cF$. Then the matrix $Q(x) \coloneq R(x)_{1,\hdots,m-1}$, containing the first $m-1$ blocks of $n$ columns of the matrix, has full row rank.  
\end{lemma}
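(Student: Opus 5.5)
The plan is to reduce the claim to the full row rank of $R(x)$ from Lemma~\ref{Lem:PosDef}, using the translation invariance of the rigidity function. Throughout, $p = nm - \frac{n(n+1)}{2}$, and by Lemma~\ref{Lem:PosDef} we have $\ker R(x)^\top = \{0\}$. The matrix $Q(x)\in\R^{p\times n(m-1)}$ has full row rank exactly when $\ker Q(x)^\top=\{0\}$. So I will take $\lambda\in\R^p$ with $Q(x)^\top\lambda = 0$ and show that $\lambda=0$.

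The key observation concerns the block structure of each row of $R(x)$ given in~\eqref{eq:RowRigidity}. The row of edge $(i,j)\in\cE_+$ has the block $(x_i-x_j)^\top$ in position $i$ and $-(x_i-x_j)^\top$ in position $j$, and zeros elsewhere. Hence the $n$-blocks of every row sum to zero:
\[
R(x)\,(\mathds{1}_m\otimes I_n) = 0 .
\]
This is the algebraic form of the fact that rigid translations are infinitesimal motions preserving all edge lengths. Transposing gives $(\mathds{1}_m^\top\otimes I_n)R(x)^\top = 0$.

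Now write $w = R(x)^\top\lambda = (w_1^\top,\dots,w_m^\top)^\top$ with blocks $w_k\in\R^n$. By definition of $Q(x)$ as the first $m-1$ column blocks of $R(x)$, we have $(w_1^\top,\dots,w_{m-1}^\top)^\top = Q(x)^\top\lambda = 0$. The identity above gives $\sum_{k=1}^m w_k = 0$, so $w_m = -\sum_{k=1}^{m-1} w_k = 0$. Thus $R(x)^\top\lambda = 0$, and full row rank of $R(x)$ forces $\lambda=0$. Therefore $Q(x)$ has full row rank $p$, and consequently $Q(x)Q(x)^\top$ is positive definite, which is the quadratic form the remark preceding the lemma anticipates.

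The only step needing care is the block-sum identity, and it is immediate from~\eqref{eq:RowRigidity}. The hypothesis on $N_m$ is not needed in this argument. The same reasoning shows that deleting any single column block of $R(x)$, not necessarily the $m$-th one, preserves full row rank.
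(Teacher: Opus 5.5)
Your proof is correct and is essentially the paper's argument. You take $\lambda\in\ker Q(x)^\top$, use that the $n$-blocks of $R(x)^\top\lambda$ sum to zero to conclude that the last block vanishes too, and then invoke Lemma~\ref{Lem:PosDef}. The paper obtains the vanishing block sum by regrouping $\sum_{(i,j)\in\cE_+} y_{i,j}\big((x_i-x_j)+(x_j-x_i)\big)=0$ vertex by vertex, while your identity $R(x)(\mathds{1}_m\otimes I_n)=0$ expresses the same translation invariance more compactly.
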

\begin{proof}
It is sufficient to show that $\ker Q(x)^{\top} = \{ 0 \}$. Choose $y = (y_{i,j})_{(i,j)\in \cE_+}^{\top} \in \R^{p}$ such that $Q(x)^{\top} y = 0$. By construction of $Q(x)$, this implies that
\begin{equation}  \label{eq:KerRigidity}
 R(x)^{\top} y = \begin{pmatrix} Q(x)^{\top} y \\ R(x)_{m}^{\top} y  \end{pmatrix} = \begin{pmatrix} 0 \\ \sum_{i \in N_{m}} -y_{i,m}(x_i - x_{m}) \end{pmatrix},
\end{equation}
where $R(x)_m$ denotes the last block of $n$ columns of the rigidity matrix.
Moreover, we can observe that for each $m\neq j \in V$, using the structure of the rows of the rigidity matrix seen in~\eqref{eq:RowRigidity}, the condition $Q(x)^{\top} y  = 0$ for the $j$-th block of $n$ rows yields 
\begin{equation} \label{eq:FormationWeights}
\sum_{i : (i,j) \in \cE_+} -y_{i,j}(x_i - x_j) + \sum_{i : (j,i) \in \cE_+} y_{j,i}(x_j - x_i) = 0.
\end{equation}
Observe that further
\begin{align*}
0 &= \sum_{(i,j) \in \cE_+} y_{i,j} ((x_i - x_j) + (x_j - x_i)) \\
&= \sum_{j \in V} \left( \sum_{i : (i,j) \in \cE_+} y_{i,j} (x_i - x_j) + \sum_{i : (i,j) \in \cE_-} y_{j,i} (x_i - x_j)  \right) \\ 
&= \!\! \sum_{m \neq j \in V}\! -\! \left( \!\sum_{i : (i,j) \in \cE_+} \!\!-y_{i,j} (x_i - x_j) + \sum_{i:(i,j) \in \cE_-}\!\! y_{j,i} (x_j - x_i) \right) + \!\sum_{i \in N_{m}} y_{i,m} (x_i - x_{m})\\
&\overset{\eqref{eq:FormationWeights}}{=}  \sum_{i \in N_{m}} y_{i,m} (x_i - x_{m}),
\end{align*}
where in the second step, we used that the addition along each edge in both directions can be expressed as summation over all pairs of a arbitrary node and its neighbors. Using~\eqref{eq:KerRigidity}, we can conclude that $y \in \ker R(x)^{\top}$. Since $\cF$ is minimally and infinitesimally rigid by assumption, Lemma~\ref{Lem:PosDef} shows that $R(x)R(x)^{\top}$ is positive definite. In particular, $\ker R(x)^{\top} = \{ 0 \}$, implying $y = 0$, finishing the proof.
\end{proof}

Observe that this implies that $Q(x)Q(x)^{\top}$ is positive definite. The following lemma is obvious by the continuous dependence of the eigenvalues of a matrix on the entries of the matrix.

\begin{lemma} \label{Lem:EigenvalueConti}
Let $A(x) \in \R^{r\times q}$ be a matrix continuously depending on $x \in \R^p$ and satisfiying $A(x_0)A(x_0)^{\top}$ positive definite for some $x_0 \in \R^p$. Then for each $\varepsilon \in (0,1)$, there exists a neighborhood $U_{\varepsilon} \subseteq \R^p$ of $x_0$ such that $\lambda_{\text{min}} (A(x) A(x)^{\top})  \geq \varepsilon \lambda_{\text{min}}(A(x_0)A(x_0)^{\top}) > 0$ for all $x \in U_{\varepsilon}$, where $\lambda_{\text{min}}(\cdot)$ denotes the smallest eigenvalue of the respective matrix. 
\end{lemma}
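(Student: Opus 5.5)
The plan is to combine continuity of $x\mapsto A(x)A(x)^{\top}$ with continuity of the smallest eigenvalue as a function of a symmetric matrix. Set $M(x) \coloneq A(x)A(x)^{\top}\in\R^{r\times r}$. Each entry of $M$ is a finite sum of products of entries of $A$, so $M$ depends continuously on $x$. Write $\lambda_0 \coloneq \lambda_{\min}(M(x_0))$; by assumption $\lambda_0>0$.

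Rather than appealing to general eigenvalue continuity, I would use the Rayleigh quotient characterization $\lambda_{\min}(S)=\min_{\|v\|=1} v^{\top} S v$ for symmetric $S$. This gives the Lipschitz estimate
\[
\lambda_{\min}(M(x)) \ge \lambda_{\min}(M(x_0)) - \|M(x)-M(x_0)\|,
\]
since $v^{\top}M(x)v = v^{\top}M(x_0)v + v^{\top}(M(x)-M(x_0))v \ge \lambda_0 - \|M(x)-M(x_0)\|$ for every unit vector $v$. Here $\|\cdot\|$ is the induced spectral norm, which is bounded by the Frobenius norm.

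Next, fix $\varepsilon\in(0,1)$. Continuity of $M$ at $x_0$ yields $\delta>0$ such that $\|M(x)-M(x_0)\|\le (1-\varepsilon)\lambda_0$ whenever $\|x-x_0\|<\delta$. Take $U_\varepsilon$ to be the open ball of radius $\delta$ around $x_0$. For every $x\in U_\varepsilon$ we then obtain
\[
\lambda_{\min}(M(x))\ge \lambda_0-(1-\varepsilon)\lambda_0=\varepsilon\lambda_0>0,
\]
where the final strict inequality uses $\varepsilon>0$ and $\lambda_0>0$.

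There is no real obstacle in this argument. The only point needing care is to justify the eigenvalue perturbation bound through the Rayleigh quotient, as above, so that the proof does not rest on an unquantified continuity statement. Note also that $\varepsilon<1$ is what makes the tolerance $(1-\varepsilon)\lambda_0$ strictly positive, so a nontrivial neighborhood exists.
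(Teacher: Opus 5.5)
Your proof is correct and takes the same route as the paper, which simply asserts the lemma as a consequence of the continuous dependence of eigenvalues on the matrix entries. Your Rayleigh-quotient bound $\lambda_{\min}(A(x)A(x)^{\top}) \ge \lambda_{\min}(A(x_0)A(x_0)^{\top}) - \|A(x)A(x)^{\top}-A(x_0)A(x_0)^{\top}\|$ supplies that continuity explicitly, in a quantitative Lipschitz form, for the symmetric matrix $A(x)A(x)^{\top}$.
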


\subsection{Problem Formulation}

Let $V = \{1,\ldots,m \}$ be a set of indices, to each of which we associate an agent with position $x_i \in \R^n$, $i\in V$, and $x = \begin{pmatrix} x_1^{\top} & \hdots & x_m^{\top}  \end{pmatrix}^{\top} \in \R^{mn}$ denotes the vector of all positions of the agents, which are governed by the dynamics
\begin{equation}\label{eq:MAS-order-r}
    x_{i}^{(r)} (t)\!= \!f_i(d_i(t), x_i(t),\! \hdots, x_i^{(r-1)}(t)) +u_i(t),\!\!\quad (x_i(0), \!\hdots\!, x_i^{(r-1)}(0)) = (x_i^0, \!\hdots\!, x_i^{(r-1)0}),
\end{equation}
for $i \in V$, of order $r\ge 2$, where $f_i \in C^1(\left(\R^n \right)^{r+1}, \R^n)$, $ (x_i^0, \hdots, x_i^{(r-1)0}) \in \left(\R^n \right)^r$ are initial conditions, $u_i(t)$ denotes the control input of agent~$i$ at time~$t\ge 0$, and $d_i\in L^{\infty}([0,\infty), \R^n)$ is a bounded disturbance signal. Let $G = (V,\cE)$ be their (undirected) communication graph with incidence matrix $D\in\R^{m\times p}$, where $p=|\cE_+|$ for the partition $\cE = \cE_+ \cup \cE_-$. Let $x_L\in C^r(\R_{\ge 0},\R^n)$ be the trajectory of the virtual leader, that satisfies that $x_L, \hdots, x_L^{(r)}$ are bounded.

The objective is to design an output feedback control law, which achieves that, for any choice $d_{ij} > 0$ of desired distances along the edges $(i,j) \in \cE_+$ of the communication graph, the distances $\Norm{x_i(t) - x_j(t)}$ evolve in a prescribed performance funnel
$$ \mathcal{F}_{\varphi_{ij}} \!=\! \{ (t,x_i,x_j)\! \in\! \R_{\geq 0} \times \R^n \times \R^n \vert -\varphi_{ij}(t) \zeta_{\text{min},ij} < \Norm{x_i(t) - x_j(t)} -d_{ij} < \varphi_{ij}(t) \zeta_{\text{max},ij} \},$$
determined by the bounds $-d_{ij} < -\zeta_{\text{min},ij} < 0 < \zeta_{\text{max},ij} $ and the choice of the function $\varphi_{ij}\in C^{r-1}(\R_{\geq 0},\R)$ is done such that $\varphi_{ij}(0) = 1$, $\dot{\varphi}_{ij}, \hdots, \varphi_{ij}^{(r-1)}$ are bounded, and $\varphi_{ij}(t) \in (\beta_{ij},1)$ for all $t > 0$ for some $\beta_{ij} > \begin{cases} \frac{-2}{\zeta_{\text{max},ij} - \zeta_{\text{min},ij}},&\text{ if } \zeta_{\text{max},ij} < \zeta_{\text{min},ij} \\ 0,& \text{ else. }\end{cases}$. Moreover, the distance between the virtual leader and the agents communicating with the virtual leader is supposed to evolve in the prescribed performance funnel 
$$ \mathcal{F}_{\varphi_{L,i}} \!\!=\! \{ (t,x_L,x_i) \!\in\! \R_{\geq 0} \times \R^n \times \R^n \vert -\varphi_{L,i}(t) \vartheta_{\text{min},i} \!<\! \Norm{x_i(t) - x_L(t) } - d_{L,i} \!<\! \varphi_{L,i}(t)  \vartheta_{\text{max},i} \},$$
determined by the bounds $-d_{L,i} < -\vartheta_{\text{min},i} < 0 < \vartheta_{\text{max},i}$, and the choice of the function $\varphi_{L,i} \in C^{r-1}(\R_{\geq 0}, \R)$ is such that $\varphi_{L,i}(0) = 1$, $\varphi_{L,i}(t) \in (\beta_{L,i},1)$ for all $t > 0$ for some $\beta_{L,i} > \begin{cases} \frac{-2}{\vartheta_{\text{max},i} - \vartheta_{\text{min},i}},&\text{ if } \vartheta_{\text{max},i} < \vartheta_{\text{min},i}\\ 0,&\text{ else.}\end{cases}$ and $\dot{\varphi}_{L,i},\hdots, \varphi_{L,i}^{(r-1)}$ are bounded.

\section{Main Results}

\subsection{Graphs with Tree Structure} \label{sec:tree}

Assume that $G$ is a tree, which in particular means that $G$ does not contain any cycles, and that the virtual leader communicates with exactly one agent. Without loss of generality, assume that this agent is $x_1$.

Define functions $T_{ij}:\R_{\ge 0}\times\R^m\times\R^m\to\R$ that are growing unboundedly as the distance $\Norm{y-z}$ approaches either one of the funnel boundaries $d_{ij}-\varphi_{ij}(t)\zeta_{\text{min},ij}$ and $d_{ij}+\varphi_{ij}(t)\zeta_{\text{max},ij}$ by
\begin{align*}
   &T_{ij} (t,y,z)\\ &=\!\begin{cases} \frac{1}{(\Norm{y - z}^2 \!-\! (d_{ij} \!-\! \varphi_{ij}(t) \zeta_{\text{min},\!ij})^2)(\Norm{y - z}^2 \!-\! (d_{ij}\! +\! \varphi_{ij}(t) \zeta_{\text{max},\!ij})^2)}, &\!\! \Norm{y\!-\!z}\!\not\in\!\{d_{ij}\! -\! \varphi_{ij}(t) \zeta_{\text{min},ij},\\
    &\phantom{\!\!\Norm{y\!-\!z}\!\not\in\!}\ \  d_{ij} \!+\! \varphi_{ij}(t) \zeta_{\text{max},ij}\},\\
    0, &\!\! \text{otherwise.}\end{cases}
\end{align*}
Similarly, define the function $T_{1L}:\R_{\ge 0}\times\R^m\times\R^m\to\R$ by
\begin{align*}
    &T_{1L} (t,y,z) \\ &=\begin{cases} \frac{1}{(\Norm{y - z}^2 - (d_{L} - \varphi_{L}(t) \vartheta_{\text{min}})^2)(\Norm{y - z}^2 - (d_{L} + \varphi_{L}(t) \vartheta_{\text{max}})^2)}, & \!\!\Norm{y\!-\!z}\!\not\in\!\{d_{L} - \varphi_{L}(t) \vartheta_{\text{min}},\\
    &\phantom{\!\!\Norm{y\!-\!z}\!\not\in\!}\ \ d_{L} + \varphi_{L}(t) \vartheta_{\text{max}}\},\\
    0, & \text{otherwise.}\end{cases}
\end{align*}
For each $(i,j)\in \cE$ and $t\in [0,\tau)$, we introduce the notation
$$w_{ij}(t) = 2 \Norm{x_i(t) - x_j(t)}^2 - (d_{ij} - \varphi_{ij}(t) \zeta_{\text{min},ij})^2 - (d_{ij} + \varphi_{ij}(t) \zeta_{\text{max},ij})^2$$
and 
$$w_{1L}(t) = 2 \Norm{x_1(t) - x_L(t)}^2 - (d_L - \varphi_L(t) \vartheta_{\text{min}})^2 - (d_L + \varphi_L(t) \vartheta_{\text{max}})^2.$$
Moreover, for each $(i,j) \in \cE_+$, define the signal 
$$\xi_{ij}(t) = T_{ij}(t,x_i(t), x_j(t))^3w_{ij}(t) (x_i(t) - x_j(t)), $$
and similarly
$$\xi_{1L}(t) = T_{1L}(t, x_1(t), x_L(t))^3w_{1L}(t)(x_1 (t) - x_L(t)) .$$
Observe that ensuring that $\xi_{ij}$ remains bounded, we can conclude that $T_{ij}$ remains bounded, which implies that the distance of the agents $i$ and $j$ remains uniformly bounded away from the lower and the upper funnel boundary. 

Define the column vector $\xi(t) = \begin{pmatrix} \xi_{ij}  \end{pmatrix}_{(i,j) \in \cE_+}\in\R^{pn}$, where the order of the edges is the same as in the incidence matrix $D\in\R^{m\times p}$, and 
\begin{equation} \label{def:xi-star}
    \xi^{\star}(t) = \begin{pmatrix}  \xi^{\star}_1(t) \\ \xi^{\star}_2(t) \\ \vdots \\ \xi^{\star}_m(t) \end{pmatrix} := (D \otimes I_n) \xi(t)+ \begin{pmatrix} \xi_{1L}(t) \\ 0_n \\ \vdots \\ 0_n \end{pmatrix},
\end{equation}
and denote $\xi_L^{\star}(t) = 0$. Further, define the error variables 
\begin{equation}\label{eq:errors-eij}
    e_{i,1}(t) = \dot{x}_i(t)- \xi_i^{\star}(t), \quad  e_{i,j}(t) = \dot{e}_{i,j-1}(t) + k_{i,j-1} e_{i,j-1}(t),
\end{equation}
for each $i\in V$ and $1 < j \leq r-1$, for some constants $k_{i,j-1} > 0$. Define the polynomials
\begin{equation}\label{eq:poly-p(s)}
    p_{i,j}(s) = \prod_{\ell=1}^{j-1} (s+k_{i,\ell}) \in\R[s],\quad i\in V,\ 1\le j\le r-2,
\end{equation}
and observe that $e_{i,r-1}(t) = p_{i,r-1}(\ddt) e_{i,1}(t)$. Further observe that for each $i\in V$, $\xi_i^\star(t)$ does only depend on $\xi_{ij}(t)$ when $(i,j)\in \cE$ (and on $\xi_{1L}(t)$, if $i=1$). This means that the implementation of the control law
\begin{align} \label{Controller}
u_i(t) &= \frac{-e_{i,r-1}(t)}{\theta_i^2 - \Norm{e_{i,r-1}(t)}^2}
\end{align}
where $\theta_i > 0$ for $i\in V$, at agent~$i\in V$ at time $t\ge 0$ requires only the measurement of the relative positions to its neighbors and their derivatives, $x_i^{(k)}(t) - x_j^{(k)}(t)$ for $(i,j)\in\cE$ and $0 \leq k \leq r-2$, which can be obtained by either communication between neighboring agents or a direct measurement, and its own velocity and its derivatives $\dot x_i(t), \hdots, x_i^{(r-1)}(t)$.

A function $x = (x_1^\top,\ldots,x_m^\top)^\top\in C^{r-1}([0,\tau),\R^{mn})$, $\tau \in (0,\infty]$, is called solution of the closed-loop system \eqref{eq:MAS-order-r}, \eqref{Controller}, if $x^{(r-1)}$ is absolutely continuous and it satisfies the initial conditions and the differential equation in~\eqref{eq:MAS-order-r} with~$u$ defined in~\eqref{Controller} for almost all $t \in [0,\tau)$. We call a solution maximal, if it does not have a right extension, which is also a solution.

\begin{theorem} \label{thm:tree}
Consider a system~\eqref{eq:MAS-order-r} with $r\ge 2$, parameters as chosen above, initial conditions $((x_1^{0\top}, \hdots, x_m^{0\top})^{\top}, \hdots, (x_1^{(r-1)0\top},\hdots,x_m^{(r-1)0\top})^{\top})\in\left(\R^{mn}\right)^r$ satisfying
\begin{enumerate}[1)]
    \item $\forall\, (i,j)\in\cE_+:\ -\zeta_{\text{min},ij} < \Norm{x_i^0 - x_j^0} - d_{ij} < \zeta_{\text{max},ij}$,
    \item $-\vartheta_{\text{min}} < \Norm{x_1^0 - x_L^0} - d_L < \vartheta_{\text{max}}$,
    \item $\forall\, i\in V:\ \Norm{ e_{i,r-1}(0) } < \theta_i$,
\end{enumerate}
and error variables as defined in~\eqref{eq:errors-eij}.
Then the closed-loop system consisting of~\eqref{eq:MAS-order-r} under the control~\eqref{Controller}, has a unique global solution $x:\R_{\ge 0}\to\R^{mn}$ that satisfies 
\begin{enumerate}[a)]
\item $x,\hdots, x^{(r-1)}$ and $u_i$ are bounded for all $i\in V$,
\item $\forall\, (i,j)\in\cE_+\ \forall\, t\ge0:\ -\varphi_{ij}(t) \zeta_{\text{min},ij} < \Norm{x_i(t) - x_j(t)} - d_{ij} < \varphi_{ij}(t) \zeta_{\text{max},ij}$,
\item $\forall\, t\ge 0:\ -\varphi_L(t) \vartheta_{\text{min}} < \Norm{x_1(t) - x_L(t) } - d_L < \varphi_{L}(t) \vartheta_{\text{max}}$.
\end{enumerate}
\end{theorem}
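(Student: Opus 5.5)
We follow the standard funnel-control argument: local existence on an open set, a Lyapunov-type bound on the edge errors that uses the tree structure, then bounds on the higher-order error variables via Lemma~\ref{lem:epsi}, and finally exclusion of finite escape. \textbf{Step 1 (existence).} Let $\cD$ be the relatively open set of $(t,x,\dots,x^{(r-1)})$ such that every edge distance and the leader distance lie strictly inside their funnels, and $\Norm{e_{i,r-1}}<\theta_i$ for all $i\in V$. On $\cD$ the closed loop is a locally Lipschitz ODE, because $T_{ij}$ and $T_{1L}$ are smooth away from the funnel boundaries and $f_i\in C^1$. Conditions 1)--3) place the initial data in $\cD$. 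Carath\'eodory theory then gives a unique maximal solution on $[0,\tau)$, and its graph leaves every compact subset of $\cD$ if $\tau<\infty$.

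\textbf{Step 2 (edge funnels).} Write $\dot x=\xi^\star+e_1$ with $e_1=(e_{i,1})_i$. Consider
\[
V(t)=\sum_{(i,j)\in\cE_+}\Phi_{ij}(t,\Norm{x_i-x_j}^2)+\Phi_{1L}(t,\Norm{x_1-x_L}^2),
\]
where $\Phi_{ij}$ is an antiderivative in the squared distance chosen so that $\partial_{x_i}\Phi_{ij}$ is proportional to $\xi_{ij}$. Indeed, $\tfrac{d}{ds}T^{-1}$ is proportional to $w$, so a suitable power of $T$ gives the $T^3w$ structure. Then $\partial_x V=-(D\otimes I_n)\xi-(\xi_{1L},0,\dots,0)$ up to sign and a positive factor, which equals $\mp\xi^\star$. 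Choose the sign in $\Phi$ so that
\[
\dot V=-c\Norm{\xi^\star}^2-c\langle\xi^\star,e_1\rangle+\partial_tV-c\langle\xi_{1L},\dot x_L\rangle .
\]
The key structural fact is that for a tree with the leader attached at node~$1$, the augmented incidence matrix $\bigl[\,D\ \ e_1\,\bigr]\in\R^{m\times m}$ is invertible. Hence $\Norm{\xi^\star}^2\ge\lambda\bigl(\Norm{\xi}^2+\Norm{\xi_{1L}}^2\bigr)$ for some $\lambda>0$. The terms $\partial_tV$ (using boundedness of $\dot\varphi$), $\dot x_L$ and $e_1$ grow at most linearly in $\Norm{\xi}$, once we know $e_1$ is bounded (Step~3). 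For large $V$ the values of $T$ are large, so $\Norm{\xi}$ dominates. We conclude that $V$, and hence every $T$, is bounded on $[0,\tau)$, so the distances stay uniformly away from the funnel boundaries.

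\textbf{Step 3 (higher-order errors).} Apply Lemma~\ref{lem:epsi} with $\psi\equiv\theta_i$ and $\alpha=0$. Since $\Norm{e_{i,r-1}}<\theta_i$ on $[0,\tau)$, it yields uniform bounds on $e_{i,1},\dots,e_{i,r-2}$. In particular $e_1$ is bounded independently of $\tau$, which feeds Step~2; the two steps are not circular. Boundedness of $\xi^\star$ and $e_1$ gives bounded $\dot x$. Since $x_L$ is bounded and the distance to it is bounded, $x_1$ is bounded, and so is $x$ because the tree is connected.

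\textbf{Step 4 (the main obstacle).} We must show $\Norm{e_{i,r-1}}\le\theta_i-\varepsilon$. Differentiating gives
\[
\dot e_{i,r-1}=x_i^{(r)}-\bigl(\text{derivatives of }\xi_i^\star\text{ up to order }r-1\bigr)+\dots=f_i+u_i+F_i(t),
\]
where $F_i$ is bounded. This boundedness needs an induction over $j$: the $j$-th derivatives of $\xi$ are polynomial in $T$, in $\varphi^{(k)}$, and in the lower errors. These are bounded by Steps~2--3, since $x^{(j)}=(\xi^\star)^{(j-1)}+e_1^{(j-1)}$ and the $e_1^{(j-1)}$ are combinations of the $e_{i,k}$. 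Set $\gamma_i=1/(\theta_i^2-\Norm{e_{i,r-1}}^2)$. Then
\[
\tfrac12\tfrac{d}{dt}\Norm{e_{i,r-1}}^2\le-\gamma_i\Norm{e_{i,r-1}}^2+M\Norm{e_{i,r-1}}.
\]
Whenever $\Norm{e_{i,r-1}}$ exceeds a threshold close to $\theta_i$, the gain $\gamma_i$ exceeds $M/\Norm{e_{i,r-1}}$ and the right-hand side is negative. This gives a uniform margin $\varepsilon$. Bookkeeping the derivative bounds is tedious, and the $\tau$-independence of all constants is the delicate point.

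\textbf{Step 5 (conclusion).} With the uniform margins, the solution stays in a compact subset of $\cD$, so $\tau=\infty$. Properties a)--c) follow, and $u_i$ is bounded because $\gamma_i$ is bounded.
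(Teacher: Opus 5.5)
Your proposal is correct and follows essentially the same route as the paper. Your antiderivative $\Phi_{ij}$ is $\tfrac12 T_{ij}^2$, so $V$ is the paper's Lyapunov function; the invertibility of $\bigl[\,D\ \ e_1\,\bigr]$ plays the role of the paper's $\lambda_{\text{min}}(\tilde{D}^{\top}\tilde{D})>0$, and is in fact the cleaner form since it respects $\xi_L^\star=0$; and Lemma~\ref{lem:epsi} with $\psi\equiv\theta_i$, the margin argument for $e_{i,r-1}$, and the compactness contradiction match the remaining steps of the paper's proof.

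Only minor tidying is needed. The leader term in $\dot V$ is $+c\langle\xi_{1L},\dot x_L\rangle$, which is harmless since you bound it in absolute value. You should also state explicitly that $\varphi_{ij}>\beta_{ij}>0$ and $d_{ij}-\zeta_{\text{min},ij}>0$ keep $|w_{ij}|$ and $\Norm{x_i-x_j}$ bounded away from zero near the funnel boundaries, since this is what makes $\Norm{\xi}$ dominate for large $V$.
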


\begin{proof}
\emph{Step 1:} We show existence of a maximal solution. Let $D_{k,(i,j)}$ denote the entries of the incidence matrix $D \in \R^{m\times p}$ for $k\in V$ and $(i,j)\in\cE_+$. Note that for each $(i,j)\in\cE_+$, $T_{ij}(t,x_i(t),x_j(t)) = T_{ij}(t,x_j(t),x_i(t)) =: T_{ji}(t,x_j(t),x_i(t))$. Further, use the notation $d_{ji} := d_{ij}, \ \varphi_{ji}:=\varphi_{ij}, \ \zeta_{\text{min},ji} \coloneq \zeta_{\text{min},ij}$ and $\zeta_{\text{max},ji} \coloneq \zeta_{\text{max},ij}$ for $(i,j)\in\cE_+$. For each $2 \leq i \leq m$, define the set
\[
    \cD_i := \setdef{(t,x)\in\R_{\geq 0} \times \R^{mn}}{T_{ij}(t,x_i, x_j) \neq 0}
\]
and the function $g_i:\cD_i \rightarrow \R^n$ by
$$g_i(t,x) = \sum_{(i,j)\in \cE}  T_{ij}(t,x_i, x_j)^3 w_{ij}(t)(x_i - x_j).$$
Similarly, define define the set
\[
    \cD_1 := \setdef{(t,x)\in\R_{\geq 0} \times \R^{mn}}{T_{1j}(t,x_1, x_j)\neq 0\ \wedge\ T_{1L}(t,x_1,x_L(t))\neq 0}
\]
and $g_1 : \cD_1 \rightarrow \R^n$ by
\begin{align*}
g_1(t,x) &= \sum_{(1,j)\in \cE}  T_{1j}(t,x_1, x_j)^3 w_{1j}(t) (x_1 - x_j) + T_{1L}(t,x_1,x_L(t))^3 w_{1L}(t) (x_1 - x_L(t)).
\end{align*}
Furthermore, recursively define the functions
\begin{align*}
    G_{i,0}:\cD_i\to\R^n,&\ (t,x^0)\mapsto g_i(t,x^0), \quad \text{ and }  G_{i,j}:\cD_i\times \big(\R^{mn}\big)^{j}\to\R^n,\\\
    (t,x^0,\ldots,x^j)&\mapsto \frac{\partial G_{i,j-1}}{\partial t}(t,x^0,\ldots,x^{j-1}) + \sum_{k=0}^{j-1} \frac{\partial G_{i,j-1}}{\partial x^k}(t,x^0,\ldots,x^{j-1}) x^{k+1}
\end{align*} 
for $i\in V$ and $1\le j\le r-2$. Utilizing the polynomials $p_{i,r-1}(s)$ of degree $r-2$ defined in~\eqref{eq:poly-p(s)} and writing them as $p_{i,r-1}(s)=\sum_{j=0}^{r-2} \mu_{i,j} s^j$ for some coefficients $\mu_{i,j}\in\R$, we define
\[
    h_i:\cD_i\times \big(\R^{mn}\big)^{r-1}\to\R^n,\ (t,x^0,\ldots,x^{r-1})\mapsto \sum_{j=0}^{r-2} \mu_{i,j}\big(x_i^{j+1} - G_{i,j}(t,x^0,\ldots,x^j)\big),\quad i\in V,
\]
where $x_i^j$ denotes the $i$-th entry of $x^j \in \R^n$. Observe that all $h_i$ are continuously differentiable on the open set 
\[
\cD \!=\! \left\{\! (t,x^0,\hdots,x^{r-1}) \!\in\! \R_{\geq 0} \!\times\! \left(\R^{mn}\right)^r  \middle\vert\!\!  \begin{array}{l} \Vert h_i(t,x^0,\hdots,x^{r-1}) \Vert < \theta_i, \text{ for all }  i\in V,\\ -\varphi_{ij}(t) \zeta_{\text{min},ij} \! <\! \Norm{x_i^0 - x_j^0}\! -\! d_{ij}\!<\! \varphi_{ij}(t) \zeta_{\text{max},ij}, \\ \text{ for all }  (i,j) \in \cE_+, \\  -\varphi_L(t) \vartheta_{\text{min}} < \Norm{x_1^0 - x_L(t)}  \!-\! d_L< \varphi_L(t) \vartheta_{\text{max}},\\ \text{ where } x^0 = (x_1^{0\top}, \hdots, x_m^{0\top})^\top \end{array} \!\!\! \right\}   .
\]
Then the closed-loop system consisting of~\eqref{eq:MAS-order-r} under the control~\eqref{Controller} can be rewritten in the form
\begin{equation}\label{eq:CL-IVP}
    \dot z(t) = F(t,z(t)), \quad z(0) = \begin{pmatrix} {x}(0) \\ \vdots \\ x^{(r-1)}(0) \end{pmatrix},
\end{equation}
with
\begin{align*}
    &F:\cD \rightarrow \left( \R^{mn}\right)^r,\\
    & (t,x^0,\hdots,x^{r-1}) \mapsto \!\!\begin{pmatrix} x^1 \\ \vdots \\ x^{r-1} \\ f_1(d_1(0),x^0,\hdots,x^{r-1}) - \frac{h_1(t,x^0,\hdots,x^{r-1})}{\theta_1^2 - \Norm{h_1(t,x^0,\hdots,x^{r-1})}^2}\\ \vdots \\  f_m(d_m(0), x^0,\hdots,x^{r-1}) - \frac{h_m(t,x^0,\hdots,x^{r-1})}{\theta_m^2 - \Norm{h_m(t,x^0,\hdots,x^{r-1})}^2}  \end{pmatrix},
\end{align*}
Since $(0,x(0), \hdots, x^{(r-1)}(0))  \in \cD$ and~$F$ is locally Lipschitz in $(x^0,\hdots,x^{r-1})$ and continuous (and therefore locally integrable) in $t$, we can conclude from~\cite[\S\,10, Thm.~XX]{Walt98} that the initial value problem~\eqref{eq:CL-IVP}  has a unique maximal solution $x : [0,\tau) \rightarrow \R^{mn}$ for some $\tau \in (0,\infty]$. Moreover, the closure of the graph of the maximal solution is not a compact subset of $\cD$. 

\emph{Step 2:} We collect some preliminary information for later use. First observe that, by construction of the functions $G_{i,j}$ we have that 
\[
    \forall\, t\in[0,\tau):\ \big(\xi_i^{*}\big)^{(j)} = G_{i,j}(t,x(t),\ldots,x^{(j)}(t))
\]
for all $i\in V$ and $0\le j\le r-2$. Utilizing the polynomials $p_{i,j}(s)$ defined in~\eqref{eq:poly-p(s)} and writing $p_{i,j}(s)=\sum_{\ell=0}^{j-1} \mu_{i,j,\ell} s^\ell$ for some $\mu_{i,j,\ell}\in\R$, we find that
\[
 \forall\, t\in[0,\tau):\ e_{i,j}(t) = \sum_{\ell=0}^{j-1} \mu_{i,j,\ell}\big(x_i^{(\ell+1)}(t) - G_{i,\ell}(t,x(t),\ldots,x^{(\ell)}(t))\big)
\]
and that they indeed satisfy~\eqref{eq:errors-eij}. 

\emph{Step 2a:} We show that the error variables $e_{i,j}$ and certain derivatives of them are bounded.
Observe that by $(t,x(t),\ldots,x^{(r-1)}(t))\in \cD$ and the above observation it follows that $\Norm{e_{i,r-1}(t)} < \theta_{i}$ for each $i\in V$ and all $t\in [0,\tau)$.  By Lemma \ref{lem:epsi}, where $\psi_i(t) = \theta_i$ trivially satisfies the condition $\dot{\psi}_i(t) \geq -\alpha_i \psi_i(t)$ for the choice $\alpha_i = 0,$ we can conclude that for each $1 \leq j \leq r-2$, the error variables satisfy $\Norm{e_{i,j}(t)} < \theta_i^{(j)} := c_{i,j} \theta_i$, for suitable constants 
\[
    0< c_{i,j} := \max\left\{ \left(\prod_{l=j}^{r-1} \frac{1}{k_{i,l}}\right), \max_{l=j,\hdots,r-1} \left(\prod_{p=r-l+j}^{r-1} \frac{1}{k_{i,p}}\right) \|e_{i,l}(0)\| \theta_i^{-1} \right\}.
\]
We want to continue inductively in order to show that all derivatives $e_{i,j}^{(k)}$ are bounded for $j+k \leq r-1$ and each $i\in V$. The case $k=0$ is discussed in the previous argument. Assume that for some fixed $k \leq r-3$, we have that $e_{i,j}^{(k)}$ is bounded for each $j \leq r-1-k$. We show that the derivatives $e_{i,j}^{(k+1)}$ are bounded for each $j\leq r-1-(k+1)$. Observe that by~\eqref{eq:errors-eij}, we have
$$e_{i,j}^{(k+1)} = e_{i,j+1}^{(k)} - k_{i,j} e_{i,j}^{(k)}, \quad \text{ for all } j\leq r-1-(k+1),$$
which is bounded by assumption. This completes the induction.
This shows in particular, that the derivatives $e_{i,1},\hdots,e_{i,1}^{(r-2)} $ are bounded.

\emph{Step 2b:} We define some constants for later use. 
For $(i,j) \in \cE_+$ set $\alpha_{ij}:=\|\dot{\varphi}_{ij}\|_\infty$ and $\alpha_L \coloneq \Vert \dot{\varphi} \Vert_{\infty} $, and define
\begin{align*}
    \lambda_{ij} &= (d_{ij} + \zeta_{\text{max},ij})^3 (\alpha_{ij}(\zeta_{\text{max},ij} + \zeta_{\text{min},ij}) +2(\theta_i^{(1)} + \theta_j^{(1)}) ),\\
    \lambda_L &= (d_L + \vartheta_{\text{max}})^3 (\alpha_L (\vartheta_{\text{max}} + \vartheta_{\text{min}}) + 2(\theta_1^{(1)} + \Vert \dot{x}_V \Vert_{\infty})),\\
    \lambda &= \max \big\{\lambda_L, \lambda_{ij} \mid  (i,j) \in \cE_+\big\},\\
    \mu_{ij} &= \beta_{ij}(\zeta_{\text{max},ij} + \zeta_{\text{min},ij}) (2 + \beta_{ij} (\zeta_{\text{max},ij} - \zeta_{\text{min},ij})) \\
    \sigma &= \min \left\{\frac{p +1}{\left( \frac{\beta_{L} (\vartheta_{\text{max}} + \vartheta_{\text{min}})(2 + \beta_{L}(\vartheta_{\text{max}} - \vartheta_{\text{min}}))}{4} \right)^4}, \frac{p+1}{\left( \frac{\mu_{ij}}{4}\right)^4} \ \middle\vert \ (i,j) \in \cE_+ \right\},\\
    w &= \min \left\{ \frac{\beta_L(\vartheta_{\text{max}} + \vartheta_{\text{min}}) (2 + \beta_{L} (\vartheta_{\text{max}} - \vartheta_{\text{min}}))}{2} , \frac{\mu_{ij}}{2} \ \middle\vert \ (i,j) \in \cE_+ \right\},
\end{align*}
and observe that all of these constants are positive.

\emph{Step 3:} We show that $\xi$ and $\xi_{1L}$ are bounded. First recall that $\xi(t) = \begin{pmatrix} \xi_{ij}  \end{pmatrix}_{(i,j) \in \cE_+}\in\R^{pn}$ for $t\in[0,\tau)$, where the order of the edges is the same as in the incidence matrix $D\in\R^{m\times p}$. As a 
 Lyapunov function candidate define, for $t\in[0,\tau)$,
 \[
    V(t) := \frac{1}{2} \sum_{(i,j) \in \cE_+} T_{ij}(t,x_i(t),x_j(t))^2 + \frac{1}{2}  T_{1L}(t,x_1(t),x_L(t))^2.
\]
\emph{Step 3a:} We derive the derivative of $T_{ij}$ along trajectories and some estimates for later use. Observe that for any $(i,j) \in \cE_+$ we have
\begin{equation} \label{eq:dotTij}
\begin{aligned}
&\ddt (T_{ij}(t,x_i(t), x_j(t))) =2 T_{ij}(t,x_i(t),x_j(t))^2 \Big(- w_{ij}(t) (x_i(t) - x_j(t))^{\top} (\dot{x}_i(t) - \dot{x}_j(t))\\
& \quad  + (d_{ij} + \varphi_{ij}(t) \zeta_{\text{max},ij} ) \zeta_{\text{max},ij} \dot{\varphi}_{ij}(t) (\Norm{x_i(t) - x_j(t)}^2 -(d_{ij} - \varphi_{ij}(t) \zeta_{\text{min},ij})^2) \\
& \quad -(d_{ij} - \varphi_{ij}(t) \zeta_{\text{min},ij} ) \zeta_{\text{min},ij} \dot{\varphi}_{ij}(t) (\Norm{x_i(t) - x_j(t)}^2 -(d_{ij} + \varphi_{ij}(t) \zeta_{\text{max},ij})^2)\Big).  
\end{aligned}
\end{equation}
Since $\vert d_{ij}-\varphi_{ij}(t) \zeta_{\text{min},ij}\vert < d_{ij} < d_{ij} + \zeta_{\text{max},ij}$, $\vert \dot \varphi_{ij}(t) \vert \leq \alpha_{ij}$, and 
\begin{equation}\label{eq:est-xixj-1}
\vert \Norm{x_i(t) - x_j(t)}^2 - (d_{ij}+\varphi_{ij}(t)\zeta_{\text{max},ij})^2 \vert \leq (d_{ij}+\varphi_{ij}(t)\zeta_{\text{max},ij})^2 \leq (d_{ij} + \zeta_{\text{max},ij})^2, 
\end{equation}
we obtain the estimate
\begin{equation} \label{Eq:Summand1}
\begin{aligned}
&\Big\vert -(d_{ij} - \varphi_{ij}(t)\zeta_{\text{min},ij}) \zeta_{\text{min},ij} \dot{\varphi}_{ij}(t)(\Norm{x_i(t) - x_j(t)}^2 - (d_{ij}+\varphi_{ij}(t)\zeta_{\text{max},ij})^2) \Big\vert\\
&\leq \Big\vert d_{ij} - \varphi_{ij}(t)\zeta_{\text{min},ij}\Big\vert \vert \zeta_{\text{min},ij} \dot{\varphi}_{ij}(t) \vert \Big\vert(\Norm{x_i(t) - x_j(t)}^2 - (d_{ij}+\varphi_{ij}(t)\zeta_{\text{max},ij})^2) \Big\vert\\
&\leq (d_{ij} + \zeta_{\text{max},ij}) \zeta_{\text{min},ij} \alpha_{ij} (d_{ij} + \zeta_{\text{max},ij})^2.
\end{aligned}
\end{equation}
Similarly, since $d_{ij} + \varphi_{ij}(t) \zeta_{\text{max},ij} \leq d_{ij} + \zeta_{\text{max},ij}$, $\vert \dot{\varphi}_{ij}(t) \vert \leq \alpha_{ij}$, and 
\begin{equation}\label{eq:est-xixj-2}
\Big\vert\Norm{x_i(t)-x_j(t)}^2 - (d_{ij} - \varphi_{ij}(t)\zeta_{\text{min},ij})^2 \Big\vert \leq \Norm{x_i(t)-x_j(t)}^2 \leq (d_{ij} + \zeta_{\text{max},ij})^2, 
\end{equation}
we can observe that
\begin{equation} \label{Eq:Summand2}
\begin{aligned}
&\Big\vert (d_{ij} + \varphi_{ij}(t) \zeta_{\text{max},ij} )\zeta_{\text{max},ij}\dot{\varphi}_{ij}(t) (\Norm{x_i(t)-x_j(t)}^2 - (d_{ij} - \varphi_{ij}(t)\zeta_{\text{min},ij})^2) \Big\vert\\
&\leq \Big\vert (d_{ij} + \varphi_{ij}(t) \zeta_{\text{max},ij} )\Big\vert \vert\zeta_{\text{max},ij}\dot{\varphi}_{ij}(t) \vert \Big\vert(\Norm{x_i(t)-x_j(t)}^2 - (d_{ij} - \varphi_{ij}(t)\zeta_{\text{min},ij})^2) \Big\vert \\
&\leq (d_{ij} + \zeta_{\text{max},ij}) \zeta_{\text{max},ij} \alpha_{ij}(d_{ij} + \zeta_{\text{max},ij})^2.
\end{aligned}
\end{equation}
Further, it follows from~\eqref{eq:est-xixj-1} and~\eqref{eq:est-xixj-2} that
\[
    2\Norm{x_i(t) - x_j(t)}^2 - (d_{ij}-\varphi_{ij}(t)\zeta_{\text{min},ij})^2 - (d_{ij} + \varphi_{ij}(t) \zeta_{\text{max},ij})^2 \leq 2 (d_{ij} + \zeta_{\text{max},ij})^2,
\]
and, since $(t,x(t),\ldots,x^{(r-1)}(t))\in\cD$ and
\[
    \xi_i^\star(t) = g_i(t,x(t)),\quad i\in V,\ t\in [0,\tau),
\]
we have 
$$ \Vert (\dot{x}_i(t) - \xi_i^{\star}(t))+(\xi_j^{\star}(t) - \dot{x}_j(t)) \Vert \leq \Vert \dot{x}_i(t) - \xi_i^{\star}(t) \Vert + \Vert \xi_j^{\star}(t) - \dot{x}_j(t) \Vert \leq \theta_i^{(1)} + \theta_j^{(1)}.$$
Therefore, together with $\Vert x_i(t) - x_j(t)\Vert \leq d_{ij} + \zeta_{\text{max},ij}$ this yields that
\begin{equation} \label{Eq:Summand3}
\begin{aligned}
&\Big\vert \big(2\Norm{x_i(t) - x_j(t)}^2 - (d_{ij}-\varphi_{ij}(t)\zeta_{\text{min},ij})^2 - (d_{ij} + \varphi_{ij}(t) \zeta_{\text{max},ij})^2\big)\\
&\quad \cdot (x_i(t)-x_j(t))^{\top}\big((\dot{x}_i(t) - \xi_i^{\star}(t))+(\xi_j^{\star}(t) - \dot{x}_j(t))\big) \Big\vert \\
&\leq \Big\vert 2\Norm{x_i(t) - x_j(t)}^2 - (d_{ij}-\varphi_{ij}(t)\zeta_{\text{min},ij})^2 - (d_{ij} + \varphi_{ij}(t) \zeta_{\text{max},ij})^2\Big\vert\\
&\quad \cdot \big\Vert x_i(t)-x_j(t)\big\Vert \,\big\Vert(\dot{x}_i(t) - \xi_i^{\star}(t))+(\xi_j^{\star}(t) - \dot{x}_j(t)) \big\Vert \\
&\quad\leq 2(d_{ij} + \zeta_{\text{max},ij})^2(d_{ij}+\zeta_{\text{max},ij})(\theta_i^{(1)} + \theta_j^{(1)}).
\end{aligned}
\end{equation}
Invoking the three estimates \eqref{Eq:Summand1}, \eqref{Eq:Summand2} and \eqref{Eq:Summand3}, we can observe that
\begin{equation} \label{Eq:LambdaGraph}
\begin{aligned}
&\Big\vert-(d_{ij} - \varphi_{ij}(t)\zeta_{\text{min},ij}) \zeta_{\text{min},ij} \dot{\varphi}_{ij}(t)(\Norm{x_i(t) - x_j(t)}^2 - (d_{ij}+\varphi_{ij}(t)\zeta_{\text{max},ij})^2)\\
&\ +(d_{ij} + \varphi_{ij}(t) \zeta_{\text{max},ij} )\zeta_{\text{max},ij}\dot{\varphi}_{ij}(t) (\Norm{x_i(t)-x_j(t)}^2 - (d_{ij} - \varphi_{ij}(t)\zeta_{\text{min},ij})^2) \\
&\ + \big(2\Norm{x_i(t) - x_j(t)}^2 - (d_{ij}-\varphi_{ij}(t)\zeta_{\text{min},ij})^2 \\
&\ - (d_{ij} + \varphi_{ij}(t) \zeta_{\text{max},ij})^2\big) (x_i(t)-x_j(t))^{\top}\big((\dot{x}_i(t) - \xi_i^{\star}(t))+(\xi_j^{\star}(t) - \dot{x}_j(t))\big)\Big\vert \\
&\leq \Big\vert -(d_{ij} - \varphi_{ij}(t)\zeta_{\text{min},ij}) \zeta_{\text{min},ij} \dot{\varphi}_{ij}(t)(\Norm{x_i(t) - x_j(t)}^2 - (d_{ij}+\varphi_{ij}(t)\zeta_{\text{max},ij})^2) \Big\vert \\
&\quad+ \Big\vert (d_{ij} + \varphi_{ij}(t) \zeta_{\text{max},ij} )\zeta_{\text{max},ij}\dot{\varphi}_{ij}(t) (\Norm{x_i(t)-x_j(t)}^2 - (d_{ij} - \varphi_{ij}(t)\zeta_{\text{min},ij})^2) \Big\vert\\
&\quad + \Big\vert \big(2\Norm{x_i(t) - x_j(t)}^2 - (d_{ij}-\varphi_{ij}(t)\zeta_{\text{min},ij})^2 \\
&\quad - (d_{ij} + \varphi_{ij}(t) \zeta_{\text{max},ij})^2\big) (x_i(t)-x_j(t))^{\top}\big((\dot{x}_i(t) - \xi_i^{\star}(t))+(\xi_j^{\star}(t) - \dot{x}_j(t))\big) \Big\vert \\
&\leq (d_{ij} + \zeta_{\text{max},ij})^3\alpha_{ij}( \zeta_{\text{min},ij}  + \zeta_{\text{max},ij}) + 2(d_{ij} + \zeta_{\text{max},ij})^3  (\theta_i^{(1)} + \theta_j^{(1)}) \\
&\leq(d_{ij} + \zeta_{\text{max},ij})^3\big(\alpha_{ij}(\zeta_{\text{min},ij}+ \zeta_{\text{max},ij}) + 2 (\theta_i^{(1)} + \theta_j^{(1)})\big) = \lambda_{ij}
\end{aligned}
\end{equation}
for all $t\in [0,\tau)$. Similarly, 
\begin{equation} \label{Eq:LambdaLeader}
\begin{aligned}
&\Big\vert -(d_{L} - \varphi_{L}(t)\vartheta_{\text{min}}) \vartheta_{\text{min}} \dot{\varphi}_{L}(t)(\Norm{x_1(t) - x_L(t)}^2 - (d_{L}+\varphi_{L}(t)\vartheta_{\text{max}})^2)\\
&\ +(d_{L} + \varphi_{L}(t) \vartheta_{\text{max}} )\vartheta_{\text{max}}\dot{\varphi}_{L}(t) (\Norm{x_1(t)-x_L(t)}^2 - (d_{L} - \varphi_{L}(t)\vartheta_{\text{min}})^2) \\
&\ + \big(2\Norm{x_1(t) - x_L(t)}^2 - (d_{L}-\varphi_{L}(t)\vartheta_{\text{min}})^2 \\
&\ - (d_{L} + \varphi_{L}(t) \vartheta_{\text{max}})^2\big) (x_1(t)-x_L(t))^{\top}\big((\dot{x}_1(t) - \xi_1^{\star}(t))+ \dot{x}_V(t)\big) \Big\vert \\
&\leq(d_{L} + \vartheta_{\text{max}})^3\big(\alpha_{L}( \vartheta_{\text{min}} + \vartheta_{\text{max}}) + 2 (\theta_1^{(1)} + \|\dot{x}_V\|_\infty)\big) = \lambda_{L}
\end{aligned}
\end{equation}
for all $t\in [0,\tau)$.

\emph{Step 3b:} We derive an estimate for $\dot V(t)$. We can conclude the following estimate, utilizing equation~\eqref{eq:dotTij} (and a similar formula for~$T_{1L}$) and the definition of $w_{ij}$ and $w_{1L}$ in the first step, and $\dot{x}_i - \dot{x}_j =  (\xi_i^{\star} - \xi_j^{\star}) +  (\dot{x}_i - \xi_i^{\star}) + (\xi_j^{\star} - \dot{x}_j)$ and $\dot{x}_1 - \dot{x}_V  = \xi_1^{\star} + (\dot{x}_1 - \xi_1^{\star}) - \dot{x}_V$  as well as the estimates~\eqref{Eq:LambdaGraph} and~\eqref{Eq:LambdaLeader} in the second step,
\begin{align*}
\dot{V}(t) \!&= \!2\!\!\! \sum_{(i,j) \in \cE_+} \!\!\!\! T_{ij}(t,x_i(t),x_j(t))^3  \Big(- w_{ij}(t) (x_i(t) - x_j(t))^{\top} (\dot{x}_i(t) - \dot{x}_j(t))\\
& \quad  + (d_{ij} + \varphi_{ij}(t) \zeta_{\text{max},ij} ) \zeta_{\text{max},ij} \dot{\varphi}_{ij}(t) (\Norm{x_i(t) - x_j(t)}^2 -(d_{ij} - \varphi_{ij}(t) \zeta_{\text{min},ij})^2) \\
& \quad + (\varphi_{ij}(t) \zeta_{\text{min},ij} -d_{ij} ) \zeta_{\text{min},ij} \dot{\varphi}_{ij}(t) (\Norm{x_i(t) - x_j(t)}^2 -(d_{ij} + \varphi_{ij}(t) \zeta_{\text{max},ij})^2)\Big) \\
&\quad+ 2 T_{1L}(t,x_1(t),x_L(t))^3 \Big( - w_{1L}(t) (x_1(t) - x_L(t))^{\top} (\dot{x}_1(t) - \dot{x}_V(t)) \\
&\quad+ (d_L + \varphi_L(t) \vartheta_{\text{max}}) \vartheta_{\text{max}} \dot{\varphi}_L(t) (\Norm{x_1(t)- x_L(t)}^2 - (d_L - \varphi_L(t) \vartheta_{\text{min}})^2) \\
&\quad -(d_L - \varphi_L(t) \vartheta_{\text{min}}) \vartheta_{\text{min}} \dot{\varphi}_L(t) (\Norm{x_1(t) - x_L(t)}^2 -(d_L +\varphi_L(t) \vartheta_{\text{max}})^2) \Big) \\
&\leq -2 \sum_{(i,j) \in \cE_+} T_{ij}(t,x_i(t),x_j(t))^3 w_{ij}(t) (x_i(t) - x_j(t))^{\top} (\xi_i^{\star}(t) - \xi_j^{\star}(t))\\
&\quad -2 T_{1L}(t,x_1(t),x_L(t))^3 w_{1L}(t) (x_1(t) - x_L(t))^{\top} \xi_1^{\star}(t) \\
&\quad - 2T_{1L}(t,x_1(t),x_L(t))^3 \lambda_L - 2 \sum_{(i,j) \in \cE_+} T_{ij}(t,x_i(t),x_j(t))^3 \lambda_{ij},
\end{align*}
where we recall that $T_{ij}(t,x_i(t),x_j(t))<0$ and $T_{1L}(t,x_1(t),x_L(t))<0$. Next, define the extended communication graph by addition of the virtual leader, that is
$$\tilde{G} = (\tilde{V}, \tilde{\cE}) = (V \cup \{ L\}, \cE \cup \{ (1,L),(L,1)\} ),$$
and observe that this is also a tree. Define $\tilde{\cE}_+ = \cE_+ \cup \{ (1,L) \}$. Let $\tilde{D}$ be the incidence matrix of the extended graph, and sets of neighbors of an agent $i$ are given by $\tilde{N}_i = N_i$, whenever $1 \neq i$, and $\tilde{N}_1 = N_1 \cup \{L \}$. Define $\tilde{N}_L = \{ 1 \}$, and $\tilde{x}(t) = \begin{pmatrix} x(t) \\ x_L(t) \end{pmatrix}$ for $t\in[0,\tau)$. Recall that, for all $i\in \tilde V$ and $t\in [0,\tau)$,
\begin{equation}\label{eq:xi-i-star}
\xi_i^*(t) = \sum_{k\in \tilde N_i} \xi_{ik}(t) = \sum_{k\in \tilde N_i} T_{ik}(t,x_i(t),x_k(t))^3 w_{ik}(t) (x_i(t) - x_k(t)).
\end{equation}
Furthermore, observe that for any parameters $a_{ij}\in\R$ associated with the edges $(i,j)\in\tilde \cE_+$, the summation of all parameters is independent of the ''direction'' of a specific edge (since the graph is undirected), that is
\begin{equation}\label{eq:sum-aij}
    \sum_{(i,j) \in \tilde{\cE}_+} a_{ij} = \sum_{(i,j) \in \tilde{\cE}_+} a_{ji}.
\end{equation}
Moreover, note that $$ w_{ji}(t) := 2 \Norm{x_j(t) - x_i(t)}^2 - (d_{ij} - \varphi_{ij}(t) \zeta_{\text{min},ij})^2 - (d_{ij} + \varphi_{ij}(t) \zeta_{\text{max},ij})^2  = w_{ij}(t)$$ for all $(i,j)\in\tilde \cE_+$ and $t\in [0,\tau)$.
Then the previous upper bound for $\dot V(t)$ can be rewritten as 
\begin{align*}
\dot{V}(t) &\leq  -2T_{1L}(t,x_1(t),x_L(t))^3 \lambda_L - 2 \sum_{(i,j) \in \cE_+} T_{ij}(t,x_i(t),x_j(t))^3 \lambda_{ij}\\
&\quad- 2 \sum_{(i,j) \in \tilde{\cE}_+} T_{ij}(t,x_i(t),x_j(t))^3 w_{ij}(t) (x_i(t) - x_j(t))^{\top} (\xi_i^{\star}(t) - \xi_j^{\star}(t))\\
&=  -2T_{1L}(t,x_1(t),x_L(t))^3 \lambda_L - 2 \sum_{(i,j) \in \cE_+} T_{ij}(t,x_i(t),x_j(t))^3 \lambda_{ij}\\
&\quad-  2\sum_{(i,j) \in \tilde{\cE}_+} T_{ij}(t,x_i(t),x_j(t))^3 w_{ij}(t)(x_i(t) - x_j(t))^{\top} \xi_i^{\star}(t)\\
&\quad-  2\sum_{(i,j) \in \tilde{\cE}_+} T_{ij}(t,x_i(t),x_j(t))^3 w_{ij}(t) (x_j(t) - x_i(t))^{\top} \xi_j^{\star}(t)\\
&\overset{\eqref{eq:sum-aij}}{=} -2T_{1L}(t,x_1(t),x_L(t))^3 \lambda_L - 2 \sum_{(i,j) \in \cE_+} T_{ij}(t,x_i(t),x_j(t))^3 \lambda_{ij}\\
&\quad -  2\sum_{(i,j) \in \tilde{\cE}_+} T_{ij}(t,x_i(t),x_j(t))^3 w_{ij}(t)(x_i(t) - x_j(t))^{\top} \xi_i^{\star}(t)\\
&\quad -  2\sum_{(i,j) \in \tilde{\cE}_+} T_{ji}(t,x_j(t),x_i(t))^3 w_{ji}(t) (x_i(t) - x_j(t))^{\top} \xi_i^{\star}(t)\\
&=  - 2T_{1L}(t,x_1(t),x_L(t))^3 \lambda_L - 2 \sum_{(i,j) \in \cE_+} T_{ij}(t,x_i(t),x_j(t))^3 \lambda_{ij}\\
&\quad-  4\sum_{(i,j) \in \tilde{\cE}_+} T_{ij}(t,x_i(t),x_j(t))^3 w_{ij}(t)(x_i(t) - x_j(t))^{\top} \xi_i^{\star}(t)\\
&\stackrel{\eqref{eq:xi-i-star}}{=} -2T_{1L}(t,x_1(t),x_L(t))^3 \lambda_L - 2 \sum_{(i,j) \in \cE_+} T_{ij}(t,x_i(t),x_j(t))^3 \lambda_{ij} - 4 \!\!\sum_{(i,j) \in \tilde{\cE}_+}  w_{ij}(t)\\
&\quad \cdot T_{ij} (t,x_i(t),x_j(t))^3 \sum_{k \in \tilde{N}_i}   T_{ik}(t,x_i(t),x_k(t))^3 w_{ik}(t)(x_i(t) - x_j(t))^{\top} (x_i(t) - x_k(t))\\
&=- 2T_{1L}(t,x_1(t),x_L(t))^3 \lambda_L - 2 \sum_{(i,j) \in \cE_+} T_{ij}(t,x_i(t),x_j(t))^3 \lambda_{ij}\\
&\quad - 2 \tilde{x}(t)^{\top} \big((\tilde{D} \tilde{T}(t)^3 W(t) \tilde{D}^{\top} \tilde{D} \tilde{T}(t)^3 W(t) \tilde{D}^{\top}) \otimes I_n\big) \tilde{x}(t),
\end{align*}
where we define the diagonal matrices $W(t) = \operatorname{diag}(w_{ij}(t))_{(i,j) \in\tilde{\cE}_+}$ and $\tilde{T}(t) = \operatorname{diag}\big(T_{ij}(t,x_i(t),x_j(t))\big)_{(i,j) \in \tilde{\cE}_+}$, where the order of the edges is the same as in the incidence matrix $\tilde{D} \in \R^{(m+1)\times (p+1)}$.

\emph{Step 3c}: We show that 
$$ \forall\, t\in[0,\tau):\ \Vert \tilde{T}(t) \Vert_F^2 \leq \hat T := \max \left\{ \Vert \tilde{T}(0) \Vert_F^2, \left( \frac{ \lambda (p+1)^3}{\lambda_{\text{min}}(\tilde{D}^{\top}\tilde{D}) w^2 \delta} \right)^{\frac{2}{3}}, \sigma \right\},$$
where $\lambda_{\text{min}}(\tilde{D}^{\top}\tilde{D})  > 0$ denotes the smallest eigenvalue of the matrix $\tilde{D}^{\top}\tilde{D}$ (which is positive definite since $(\tilde V, \tilde \cE)$ is a tree), $\delta := \min_{(i,j) \in \tilde{\cE}_+} (d_{ij} - \zeta_{\text{min},ij})^2$ and $\lambda, w,\sigma$ have been defined in Step~2. Observe that $\Vert \tilde{T}(t)\Vert_F^2 = 2 V(t)$ by definition of~$V$. 

Seeking a contradiction, assume that there exists $t_1 \in [0,\tau)$ such that $\Vert \tilde{T}(t_1) \Vert_{F}^2 > \hat T$. Then
\[
    t_0 := \max\{ t\in [0,t_1) \mid \Vert \tilde{T}(t) \Vert_{F}^2 = \hat T\} < t_1
\]
is well-defined, since $\Vert \tilde{T}(0) \Vert_F^2 \le \hat T$ and $\tilde T$ is continuous on $[0,\tau)$. In particular,
\begin{equation}\label{eq:est-hatT}
    \forall\, t\in [t_0,t_1]:\ \Vert \tilde{T}(t) \Vert_F^2 \ge \hat T.
\end{equation}
Fix $t\in [t_0,t_1]$. Since $\tilde{T}(t)$ is a diagonal matrix, this yields 
\[
    \max_{(i,j) \in \tilde{\cE}_+} \vert T_{ij} (t, x_i(t),x_j(t))\vert > \sqrt{\tfrac{\hat T}{p+1}} \ge \sqrt{\tfrac{\sigma}{p+1}},
\]
and therefore, invoking the definition of $T_{ij}$ and using the notation
$$ d_{1L}=d_L, \quad \varphi_{1L}=\varphi_L, \quad \zeta_{\text{max},1L} = \vartheta_{\text{max},1},\quad \zeta_{\text{min},1L}=\vartheta_{\text{min},1},$$ there exists $(i,j) \in \tilde\cE_+$ such that 
\[
    |T_{ij} (t, x_i(t),x_j(t))| = \max\setdef{|T_{kl}(t,x_k(t),x_l(t))|}{(k,l)\in\tilde\cE_+ }
\]
and
\begin{align*}
    \left\vert \big(\!\Norm{x_i(t) - x_j(t)}^2\! -\! (d_{ij}-\varphi_{ij}(t) \zeta_{\text{min},ij})^2\big)\! \big(\Norm{x_i(t) - x_j(t)}^2 \!-\! (d_{ij}+\varphi_{ij}(t) \zeta_{\text{max},ij} )^2\big) \right\vert& \\ < \sqrt{\tfrac{p+1}{\sigma}}.&
\end{align*}
This yields 
\begin{align*}
    (a):\quad& \Norm{x_i(t) - x_j(t)}^2 - (d_{ij}-\varphi_{ij}(t) \zeta_{\text{min},ij})^2 < \left( \tfrac{p+1}{\sigma} \right)^{\frac{1}{4}} \\
    \text{ or }\quad (b):\quad& \left\vert\Norm{x_i(t) - x_j(t)}^2 - (d_{ij}+\varphi_{ij}(t) \zeta_{\text{max},ij})^2\right\vert < \left( \tfrac{p+1}{\sigma} \right)^{\frac{1}{4}},
\end{align*}
for this particular edge $(i,j) \in \tilde{\cE}_+$. This implies that either, in case~(a),
\begin{align*}
 w_{ij}(t) &<  \Norm{x_i(t) - x_j(t)}^2 - (d_{ij}+ \varphi_{ij}(t) \zeta_{\text{max},ij})^2 + \left( \tfrac{p+1}{\sigma} \right)^{\frac{1}{4}}\\
&< \left((d_{ij}-\varphi_{ij}(t) \zeta_{\text{min},ij})^2 + \left( \tfrac{p+1}{\sigma} \right)^{\frac{1}{4}}\right) -  (d_{ij}+\varphi_{ij}(t) \zeta_{\text{max},ij})^2 +\left( \tfrac{p+1}{\sigma} \right)^{\frac{1}{4}} \\
&\leq -\beta_{ij}(\zeta_{\text{max},ij} + \zeta_{\text{min},ij}) (2+ \beta_{ij}(\zeta_{\text{max},ij} - \zeta_{\text{min},ij})) + 2\left( \tfrac{p+1}{\sigma} \right)^{\frac{1}{4}} \\
&\stackrel{\text{def.}\ \sigma}{\le} -\tfrac12 \beta_{ij}(\zeta_{\text{max},ij} + \zeta_{\text{min},ij}) (2+ \beta_{ij}(\zeta_{\text{max},ij} - \zeta_{\text{min},ij}))\le -w < 0,
\end{align*}
or, in case~(b),
\begin{align*}
 w_{ij}(t)  &> \Norm{x_i(t) - x_j(t)}^2 - (d_{ij}- \varphi_{ij}(t) \zeta_{\text{min},ij})^2 - \left( \tfrac{p+1}{\sigma} \right)^{\frac{1}{4}}\\
&> \left((d_{ij}+\varphi_{ij}(t) \zeta_{\text{max},ij})^2 - \left( \tfrac{p+1}{\sigma} \right)^{\frac{1}{4}}\right) - (d_{ij}-\varphi_{ij}(t) \zeta_{\text{min},ij})^2 -\left( \tfrac{p+1}{\sigma} \right)^{\frac{1}{4}} \\
&\geq \beta_{ij}(\zeta_{\text{max},ij} + \zeta_{\text{min},ij}) (2+ \beta_{ij}(\zeta_{\text{max},ij} - \zeta_{\text{min},ij})) - 2\left( \tfrac{p+1}{\sigma} \right)^{\frac{1}{4}} \\
&\stackrel{\text{def.}\ \sigma}{\ge} \tfrac12 \beta_{ij}(\zeta_{\text{max},ij} + \zeta_{\text{min},ij}) (2+ \beta_{ij}(\zeta_{\text{max},ij} - \zeta_{\text{min},ij})) \geq w > 0.
\end{align*}
In view of the above considerations we find that 
\begin{align}
    &\sum_{(k,l)\in\tilde{\cE}_+} T_{kl}(t,x_k(t),x_l(t))^6 w_{kl}(t)^2 \ge T_{ij}(t,x_i(t),x_j(t))^6 w_{ij}(t)^2\notag \\
    &\qquad \ge w^2 \max\setdef{T_{kl}(t,x_k(t),x_l(t))^6}{(k,l)\in\tilde{\cE}_+ }\ge w^2 \left(\frac{\Vert \tilde{T}(t) \Vert_F^2 }{p+1}\right)^3,\label{eq:est-sumTij-wij}
\end{align}
where in the last step, the definition of $\Vert \cdot \Vert_{F}$ is used to observe that
\begin{equation}
\max\setdef{T_{kl}(t,x_k(t),x_l(t))^2}{(k,l)\in\tilde{\cE}_+ } \geq \frac{1}{p+1} \sum_{(k,l) \in \tilde{\cE}_+} T_{kl}(t,x_k(t),x_l(t))^2 = \frac{\Vert \tilde{T} \Vert_F^2}{p+1}.
\end{equation}
Therefore, in view of Step~3b, we may estimate
\begin{align*}
\dot{V}(t) &\leq -2T_{1L}(t,x_1(t),x_L(t))^3 \lambda_L - 2 \sum_{(i,j) \in \cE_+} T_{ij}(t,x_i(t),x_j(t))^3 \lambda_{ij} \\
&\quad - 2 \tilde{x}(t)^{\top} \big((\tilde{D} \tilde{T}(t)^3 W(t) \tilde{D}^{\top} \tilde{D} \tilde{T}(t)^3 W(t) \tilde{D}^{\top}) \otimes I_n\big) \tilde{x}(t) \\
&\stackrel{(*)}{\leq} 2 \lambda \Vert \tilde{T}(t) \Vert_F^3 - 2\Norm{\big((\tilde{D} \tilde{T}(t)^3 W(t) \tilde{D}^{\top} )\otimes I_n\big) \tilde{x}(t)}^2\\
&\leq 2 \lambda \Vert \tilde{T}(t) \Vert_F^3 - 2\lambda_{\text{min}}(\tilde{D}^{\top}\tilde{D}) \underset{= \sum_{(i,j)\in\tilde\cE_+} T_{ij}(t,x_i(t),x_j(t))^6 w_{ij}(t)^2 \Norm{x_i(t) - x_j(t) }^2}{\underbrace{\Norm{\big((\tilde{T}(t)^3 W(t) \tilde{D}^{\top} )\otimes I_n\big) \tilde{x}(t)}^2}}\\
&\stackrel{\eqref{eq:est-sumTij-wij}}{\leq} 2 \lambda \Vert \tilde{T}(t) \Vert_F^3 - 2\lambda_{\text{min}}(\tilde{D}^{\top}\tilde{D}) w^2 \frac{\Vert \tilde{T}(t) \Vert_F^6}{(p+1)^3}  \min\{\Norm{x_i(t) - x_j(t) }^2 \ \vert \ (i,j) \in \tilde{\cE}_+ \}\\
&\stackrel{(**)}{\leq} 2 \lambda \Vert \tilde{T}(t) \Vert_F^3 -2 \lambda_{\text{min}}(\tilde{D}^{\top}\tilde{D}) w^2 \delta \frac{\Vert \tilde{T}(t) \Vert_F^6}{(p+1)^3}\\
&\stackrel{\eqref{eq:est-hatT}}{\le} 2 \Vert \tilde{T}(t) \Vert_F^3 \left(\lambda -\lambda_{\text{min}}(\tilde{D}^{\top}\tilde{D}) \frac{w^2 \delta}{(p+1)^3} \hat T^{\tfrac32}\right) \le  0
\end{align*}
for all $t\in[t_0,t_1]$, where for $(*)$ we used that $\|z\|_3\le \|z\|_2$ for all $z\in\R^{p+1}$, where $\|\cdot\|_q$ denotes the $q$-norm on $\R^{p+1}$ for $q\in\{2,3\}$. Moreover, for $(**)$ we used that 
$$ \min\{\Norm{x_i(t) - x_j(t) }^2 \mid (i,j) \in \tilde{\cE}_+\} \geq \min_{(i,j) \in \tilde{\cE}_+} (d_{ij} - \zeta_{\text{min},ij})^2 = \delta.$$
Upon integration we obtain
\[
    \hat T = \Vert \tilde{T}(t_0)\Vert_F^2 = 2 V(t_0) \ge 2 V(t_1) = \Vert \tilde{T}(t_1)\Vert_F^2 > \hat T,
\]
a contradiction. Thus, the claim of this step is shown.

\emph{Step 3d}: We conclude Step~3 by showing that $\xi$ is bounded. By Step~3c we have that $t\mapsto T_{ij}(t,x_i(t),x_j(t))$ is bounded on $[0,\tau)$ for all $(i,j) \in \tilde{\cE}_+$, implying that there exist $\varepsilon_{1,ij}, \varepsilon_{2,ij} \in (0,1)$ such that
\begin{equation}\label{eq:eps-xi-xj}
    \forall\, t\in[0,\tau):\ -\varphi_{ij}(t) \zeta_{\text{min},ij} (1+\varepsilon_{1,ij}) \le \Norm{x_i(t) - x_j(t)} - d_{ij} \le \varphi_{ij}(t) \zeta_{\text{max},ij} \varepsilon_{2,ij}
\end{equation}
for each $(i,j)\in \cE_+$. Similarly, there exist $\varepsilon_{1,L}, \varepsilon_{2,L} \in (0,1)$ such that 
\begin{equation}\label{eq:eps-x1-xV}
    \forall\, t\in[0,\tau):\ -\varphi_{L}(t) \vartheta_{\text{min}} (1+\varepsilon_{1,L}) \le \Norm{x_1(t) - x_L(t)} - d_{L} \le \varphi_{L}(t) \vartheta_{\text{max}} \varepsilon_{2,L}.
\end{equation}
Therefore,
\[
    \xi_{ij}(t) = T_{ij}(t,x_i(t),x_j(t))^3 w_{ij}(t) (x_i(t)- x_j(t))
\]
is bounded for any $(i,j) \in \tilde{\cE}_+$, finishing the proof of Step~3. 

\emph{Step 4:} We show that $\dot{\xi},\hdots,\xi^{(r-1)}$, $\dot{\xi}_{1L},\hdots, \xi_{1L}^{(r-1)}$ and $\dot{x},\hdots,x^{(r-1)}$ are bounded. It follows from Step~3 that $\xi^{\star}_i$ is bounded for all $i\in V$. Moreover, since $\Norm{\dot{x}_i(t) - \xi_i^{\star}(t)} < \theta_i^{(1)}$ for all $t \in [0,\tau)$ by Step~2a, it follows that $\dot{x}_i$ is bounded for any $i\in V$. Now, observe that for each $(i,j) \in \tilde{\cE}_+$, the components of $\dot{\xi}(t)$ are given by
\begin{equation} \label{Eq:xiDerivative}
\begin{aligned}
\dot{\xi}_{ij}(t) &= 3 T_{ij}(t,x_i(t),x_j(t))^2 \ddt (T_{ij}(t,x_i(t), x_j(t))) w_{ij}(t)(x_i(t)-x_j(t))\\
&+ T_{ij}(t,x_i(t),x_j(t))^3 (x_i(t) - x_j(t)) \big(4 (x_i(t) - x_j(t))^\top (\dot{x}_i(t) - \dot{x}_j(t)) \\
&\quad + 2(d_{ij} - \varphi_{ij}(t) \zeta_{\text{min},ij} ) \zeta_{\text{min},ij} \dot \varphi_{ij}(t) - 2 (d_{ij} + \varphi_{ij}(t) \zeta_{\text{max},ij} ) \dot \varphi_{ij}(t) \zeta_{\text{max},ij} \big)\\
&+T_{ij}(t,x_i(t),x_j(t))^3 w_{ij}(t)(\dot{x}_i(t) - \dot{x}_j(t))
\end{aligned}
\end{equation}
for $t\in [0,\tau)$. Since $t\mapsto T_{ij}(t,x_i(t),x_j(t))$, $x_i$ and $\dot{x}_i$ as well as $\varphi_{ij}$, $\dot \varphi_{ij}$ and $w_{ij}(t)$ are bounded, we may infer from~\eqref{eq:dotTij} that $t\mapsto \ddt T_{ij}(t,x_i(t),x_j(t))$ is bounded, and thus $\dot \xi_{ij}$ is bounded for all $(i,j) \in \tilde{\cE}_+$.

We continue inductively to show that the higher order derivatives are also bounded. Assume that for some $1 \leq k \leq r-2$, the derivatives $\dot{\xi},\hdots, \xi^{(k)}$, $\dot{\xi}_{1L},\hdots, \xi_{1L}^{(k)}$ and $\dot{x},\hdots, x^{(k)}$ are bounded. We show that the respective $(k+1)$-st order derivatives $\xi^{(k+1)}$, $\xi_{1L}^{(k+1)}$ and $x^{(k+1)}$ are also bounded.
First, observe that, since $ \xi^{(k)}$ and $\xi_{1L}^{(k)}$ are bounded, $(\xi_i^{\star})^{(k)}$ is also bounded for each $i\in V$. Further, by the definition of the error variables $e_{i,j}$, we can observe that, for each $i\in V, 0\leq k\leq r-2$ there are constants $\gamma_{i,j,k} \in \R$ such that 
\begin{equation}
e_{i,k+1}(t) = x_i^{(k+1)}(t) - (\xi_i^{\star})^{(k)}(t) + \sum_{j=0}^{k-1} \gamma_{i,j,k} e_{i,1}^{(j)}(t).
\end{equation}
This shows that, by Step~2a, the error 
\begin{align*}
&\Norm{x_i^{(k+1)}(t) - (\xi_i^{\star})^{(k)}(t) } = \Norm{e_{i,k+1}(t) - \sum_{j=0}^{k-1} \gamma_{i,j,k} e_{i,1}^{(j)}(t) } \\
&\leq \Norm{e_{i,k+1}(t) } + \sum_{j=0}^{k-1} \vert\gamma_{i,j,k}\vert \Norm{e_{i,1}^{(j)}(t)} \leq \theta_{i}^{(k+1)} + \sum_{j = 0}^{k-1} \vert\gamma_{i,j,k}\vert \, \|e_{i,1}^{(j)}\|_\infty
\end{align*}
is bounded. Therefore, $x^{(k+1)}$ is bounded. To complete the induction, it remains to show that $\xi^{(k+1)}$ and $\xi_{1L}^{(k+1)}$ are bounded. Observe that for any $(i,j)\in\tilde{\cE}_+$, by equations~\eqref{Eq:xiDerivative},\eqref{eq:dotTij} and the definition of $T_{ij}$, we can express $\dot{\xi}_{ij} $ as a continuous function of the bounded functions $x_i,x_j, \dot{x}_i, \dot{x}_j, \varphi_{ij}, \dot{\varphi}_{ij}$. Similarly, we can express $\xi_{ij}^{(k+1)}$ as a continuous function of the $x_i,x_j,\hdots, x_i^{(k+1)},x_j^{(k+1)}$, which are bounded by the induction hypothesis, and $\varphi_{ij},\hdots,\varphi_{ij}^{(k+1)}$, which are bounded by assumption, since $k+1 \leq r-1$. Hence $\xi^{(k+1)}$ and $\xi_{1L}^{(k+1)}$ are bounded, completing the induction.

\emph{Step 5:} We show that $u$ is bounded. 
Since $x_i,\hdots,x_i^{(r-1)}$ are bounded for all $i\in V$ by Step~4, and $d_i$ is bounded by assumption, there exist compact sets $K_i \subseteq \left(\R^n\right)^{r+1}$ such that $(d_i(t),x_i(t),\hdots,x_i^{(r-1)}(t)) \in K_i$ for all $t \in [0,\tau)$. By continuity of the functions $f_i$, the constants $\Vert f_{i\vert K_i} \Vert_{\infty}$ are finite. By Step~2a, the error derivatives $e_{i,1}^{(j)}$ are bounded for each $i\in V$ and each $0\leq j\leq r-2$. 
Using assumption~3) on the initial conditions, we can choose $\varepsilon_i \in (0,1) $ such that $\Vert  e_{i,r-1}(0) \Vert \leq \varepsilon_i\theta_i$ and
$$ \frac{\varepsilon_i^2}{1-\varepsilon_i^2} >  \theta_i\left(\Vert f_{i\vert K_i} \Vert_{\infty} + \Vert (\xi_i^{\star})^{(r-1)} \Vert_{\infty} + \sum_{j=0}^{r-3} \vert \gamma_{i,j,r-2}\vert\, \Vert e_{i,1}^{(j+1)} \Vert_\infty\right) =: \rho.$$
Fix $i\in V$. In order to prove boundedness of~$u_i$, we show that 
\begin{equation}\label{eq:eps-dot-xi}
    \forall\,t\in [0,\tau):\ \Norm{e_{i,r-1}(t)} \leq \varepsilon_i \theta_i.
\end{equation}
Seeking a contradiction, assume that there exists $t_{1,i} \in [0,\tau)$ such that $\Norm{e_{i,r-1}(t_{1,i})} >\varepsilon_i \theta_i $. Then 
$$ t_{0,i} = \sup \{ t \in [0,t_{1,i}] \vert \Norm{e_{i,r-1}(t)} = \varepsilon_i\theta_i  \}$$ 
is well-defined due to continuity and $\Norm{e_{i,r-1}(0)} \le \varepsilon_i\theta_i$. Therefore, $\Norm{e_{i,r-1}(t)} \ge \varepsilon_i\theta_i$ for all $t \in [t_{0,i}, t_{1,i}]$ and it follows that
\begin{align*}
&\tfrac{1}{2} \ddt \Norm{e_{i,r-1}(t)}^2 =  e_{i,r-1}(t)^\top \left( x_i^{(r)}(t) - (\xi_i^{\star})^{(r-1)}(t) + \sum_{j=0}^{r-3} \gamma_{i,j,r-2} \ e_{i,1}(t)^{(j+1)} \right)\\
&\stackrel{\eqref{eq:MAS-order-r}}{\leq} \!\!\Norm{e_{i,r-1}(t)}\! \!\left(\!\! \Vert f_{i\vert K_i} \Vert_{\infty} \!+\! \Vert (\xi_i^{\star})^{(r-1)} \Vert_{\infty} \!+\! \sum_{j=0}^{r-3} \vert \gamma_{i,j,r-2}\vert \, \Vert e_{i,1}^{(j+1)}(t) \Vert \!\right)\! - \!\frac{\Norm{e_{i,r-1}(t)}^2}{\theta_i^2 \!-\! \Norm{e_{i,r-1}(t)}^2} \\
&\leq -\frac{\varepsilon_i^2}{1-\varepsilon_i^2}+ \rho < 0.
\end{align*}
Upon integration this yields
$$ \varepsilon_i\theta_i = \Norm{e_{i,r-1}(t_{0,i})} > \Norm{e_{i,r-1}(t_{1,i})} > \varepsilon_i\theta_i, $$
a contradiction. Thus, we have shown boundedness of~$u$.

\emph{Step 6:} We show that $\tau=\infty$. Seeking a contradiction, assume that $\tau<\infty$. Then it follows from~\eqref{eq:eps-xi-xj},~\eqref{eq:eps-x1-xV} and~\eqref{eq:eps-dot-xi} together with $e_{i,r-1}(t) = h_i(t,x(t),\ldots,x^{(r-1)}(t))$ that the closure of the graph of the maximal solution $(x,\hdots,x^{(r-1)})$ is a compact subset of $\cD$, which contradicts the findings of Step~1. This finishes the proof.
\end{proof}

\begin{remark}
The previous theorem assumes the same relative degree $r \geq 2$ for all agents. However, the extension to multi-agent systems with heterogeneous relative degrees of the agents is straightforward, as the same arguments apply with only minor modifications. 
\end{remark}

\subsection{Rigid Frameworks} \label{sec:rigid1}

Now, we aim towards designing an output feedback control law, which achieves the same control objective as in the previous section, for the case of minimally and infinitesimally rigid frameworks. Again, consider the system dynamics~\eqref{eq:MAS-order-r}.

The virtual leader communicates with a subset $N_L \subseteq V$ of agents. Define the extended communication graph $\tilde{G} = (\tilde{V}, \tilde{\cE})$, where $\tilde{V} = V \cup \{ L \}$, and $\tilde{\cE} = \cE \cup (N_L \times \{ L \}) \cup (\{ L \} \times N_L)$. The partition of the extended graph is determined by $\tilde{\cE}_+ = \cE_+ \cup (N_L \times \{ L \})$.

Let the desired formation of the set of agents including the virtual leader be defined by a minimally and infinitesimally rigid framework $\tilde{\cF} = (\tilde{G}, \tilde{x}^{\star})$ where $\tilde{x}^{\star} = \left( \tilde{x}_1^{\star\top}, \hdots, \tilde{x}_{L}^{\star\top} \right)^{\top}\in \R^{n(m+1)}$ is the desired formation consistent with the desired edge lengths $d_{ij} = \Norm{x_i^{\star} - x_j^{\star}} > 0$. Further, choose some $\delta > 0$ such that each framework $(\tilde{G}, \tilde{x})$ where $\Vert \tilde{x}_i - \tilde{x}^{\star}_i \Vert_{\R^n} < \delta$ for each $i \in \tilde{V}$ is infinitesimally rigid, and 
\begin{equation} \label{eq:BoundEigenvalue}
\mu \coloneq \inf_{\Vert \tilde{x}_i - \tilde{x}^{\star}_i \Vert < \delta, \forall i \in \tilde{V}} \lambda_{\text{min}}(R(\tilde{x}) R(\tilde{x})^{\top}) > 0.
\end{equation}
Some $\delta > 0$ satisfying these conditions exists by Lemma~\ref{Lem:EigenvalueConti}.

For each edge $(i,j) \in \tilde{\cE}_+$, let $d_{\text{min},ij} > 0$ be a lower bound that ensures collision avoidance of the agents $i$ and $j$, and $d_{\text{max},ij} > d_{ij}$ be an upper bound for the distance of the agents that will ensure connectivity maintenance. The bounds $\zeta_{\text{min},ij}, \zeta_{\text{max},ij} > 0$ are chosen in such a way that the following conditions are satisfied.
\begin{itemize}
\item Collision Avoidance: $\Norm{x_i(t) - x_j(t)} > d_{\text{min},ij}>0$, for all $t\geq 0$,
\item Connectivity Maintenance: $\Norm{x_i(t) - x_j(t)} < d_{\text{max},ij}$, for all $t\geq 0$,
\item Infinitesimal Rigidity Condition: $\left\vert \Norm{x_i (t)- x_j(t)} - d_{ij} \right\vert < \delta$, for all $t\geq 0$.
\end{itemize}

We use the same funnel functions $\varphi_{ij}$, functions $T_{ij}, \xi^{\star}, w_{ij}(t)$ and diagonal matrix $W(t) = \operatorname{diag} \left( (w_{ij}(t))_{(i,j) \in \tilde{\cE}_+}  \right) $, as well as error variables~\eqref{eq:errors-eij}, polynomials~\eqref{eq:poly-p(s)} and control law~\eqref{Controller} as previously. Let $R(\tilde{x}) \in \R^{\vert \tilde{\cE}_+ \vert \times n(m+1)}$ be the rigidity matrix of the framework $\tilde{\cF} = (\tilde{G}, \tilde{x})$, and define an altered rigidity matrix
$$ \tilde{R}(\tilde{x}) = \begin{pmatrix} R(\tilde{x})_{1,\hdots,m} &  0_{\vert \tilde{\cE}_+ \vert \times n} \end{pmatrix} ,$$
where $R(\tilde{x})_{1,\hdots,m}$ denotes the first $m$ blocks of $n$ columns of the respective matrix. This alteration in the rigidity matrix will be necessary later on due to the fact that we cannot apply a control input to the virtual leader.

Observe that the reference signal $\xi^{\star}$ can be expressed as 
\begin{equation} \label{eq:xi star}
\begin{aligned}
\xi^{\star}(t,\tilde{x})&= \begin{pmatrix} \xi_1^{\star}(t,\tilde{x})^{\top} & \hdots & \xi_{m}^{\star}(t,\tilde{x})^{\top} & \xi_L^{\star} (t,\tilde{x})^{\top}\end{pmatrix}^{\top} \\
&= \tilde{R}(\tilde{x})^{\top} W(t,\tilde{x}) \begin{pmatrix} (T_{ij}(t,x_i,x_j)^{3})_{(i,j) \in \tilde{\cE}_+} \end{pmatrix},
\end{aligned}
\end{equation}
where $\begin{pmatrix} (T_{ij}(t,x_i,x_j)^{3})_{(i,j) \in \tilde{\cE}_+} \end{pmatrix}$ is a column vector, where the edges are considered in the same order as in the rigidity matrix, since 
\begin{equation*}
\begin{aligned}
\xi_i^{\star}(t) &= \sum_{(i,j) \in \tilde{\cE}_+} T_{ij}(t,x_i(t),x_j(t))^3 w_{ij}(t,x_i(t),x_j(t))(x_i(t) - x_j(t)) \\
&\quad + \sum_{(j,i) \in \tilde{\cE}_+} T_{ji}(t,x_j(t),x_i(t))^3 w_{ji}(t,x_j(t),x_i(t)) (x_i(t) - x_j(t)).
\end{aligned}
\end{equation*}

\begin{theorem} \label{thm:rigid}
Consider a system~\eqref{eq:MAS-order-r} with $r\ge 2$, parameters as chosen above, initial conditions $((x_1^{0\top}, \hdots, x_m^{0\top})^{\top}, \hdots, (x_1^{(r-1)0\top},\hdots,x_m^{(r-1)0\top})^{\top}) \in \left(\R^{mn}\right)^r$ satisfying
\begin{enumerate}[1)]
    \item $\forall\, (i,j)\in\tilde{\cE}_+:\ -\zeta_{\text{min},ij} < \Norm{x_i^0 - x_j^0} - d_{ij} < \zeta_{\text{max},ij}$,
    \item $\forall\, i\in V:\ \Norm{ e_{i,r-1}(0) } < \theta_i$,
\end{enumerate}
and error variables as defined in~\eqref{eq:errors-eij}. Then the closed-loop system consisting of~\eqref{eq:MAS-order-r} under the control~\eqref{Controller}, has a unique global solution $x:\R_{\ge 0}\to\R^{mn}$ that satisfies 
\begin{enumerate}[a)]
\item $x,\hdots, x^{(r-1)}$ and $u_i$ are bounded for all $i\in V$,
\item $\forall\, (i,j)\in\tilde{\cE}_+\ \forall\, t\ge0:\ -\varphi_{ij}(t) \zeta_{\text{min},ij} < \Norm{x_i(t) - x_j(t)} - d_{ij} < \varphi_{ij}(t) \zeta_{\text{max},ij}$.
\end{enumerate}
\end{theorem}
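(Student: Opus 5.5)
We follow the structure of the proof of Theorem~\ref{thm:tree} almost verbatim. Only the argument that bounds the Lyapunov function changes: there the tree structure gave positive definiteness of $\tilde D^\top\tilde D$, and here the rigidity matrix plays that role.

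\emph{Steps 1, 2 and 2a carry over.} We define the open set $\cD$ exactly as before, but with the funnel constraints imposed for all $(i,j)\in\tilde\cE_+$. Rewriting the closed loop as $\dot z=F(t,z)$, \cite[\S\,10, Thm.~XX]{Walt98} yields a unique maximal solution on $[0,\tau)$ whose graph closure is not a compact subset of $\cD$. Lemma~\ref{lem:epsi} with $\psi_i\equiv\theta_i$ and $\alpha_i=0$ then gives bounds $\|e_{i,j}\|<c_{i,j}\theta_i$, and by the same induction as before all derivatives $e_{i,1}^{(k)}$ with $k\le r-2$ are bounded. We also note that, because $\zeta_{\text{min},ij},\zeta_{\text{max},ij}$ were chosen to respect the infinitesimal rigidity condition and $\varphi_{ij}\le 1$, the funnel constraints keep $\tilde x(t)$ in the $\delta$-neighbourhood of $\tilde x^\star$ (modulo rigid motions, since everything depends only on edge lengths). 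Hence \eqref{eq:BoundEigenvalue} applies along the solution. To get this we would first translate and rotate $\tilde x^\star$ so that it lies near $\tilde x(t)$; $\lambda_{\min}(RR^\top)$ is invariant under Euclidean motions.

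\emph{Step 3 is the main work.} We take the same $V=\tfrac12\sum_{(i,j)\in\tilde\cE_+}T_{ij}^2$ and reuse the estimates \eqref{Eq:LambdaGraph} and \eqref{Eq:LambdaLeader} unchanged for every leader edge $(i,L)$, $i\in N_L$. This gives
\[
\dot V\le 2\lambda\|\tilde T\|_F^3-2\,(\tau_3)^\top W R(\tilde x)\,\dot{\tilde x}_{\text{ideal}},
\]
where $\tau_3:=(T_{ij}^3)_{(i,j)\in\tilde\cE_+}$ and the ideal velocities are $\xi_i^\star$ for the agents and $0$ for the leader. The leader's true velocity is absorbed into $\lambda_L$. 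Using \eqref{eq:xi star}, the remaining term becomes
\[
-2\,\tau_3^\top W\,R(\tilde x)\,\tilde R(\tilde x)^\top W\tau_3=-2\,\|Q(\tilde x)^\top W\tau_3\|^2,
\]
where $Q(\tilde x)=R(\tilde x)_{1,\dots,m}$ consists of the agent columns. We need a uniform lower bound $\lambda_{\min}(QQ^\top)\ge\mu_Q>0$ on the neighbourhood. Full row rank of $Q$ at each point follows from the lemma on deleting the leader's columns. Uniformity requires more than \eqref{eq:BoundEigenvalue}, which controls only $RR^\top$, so we would shrink $\delta$ via Lemma~\ref{Lem:EigenvalueConti} applied to $Q$ together with compactness modulo rigid motions. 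With this bound, $\dot V\le 2\lambda\|\tilde T\|_F^3-2\mu_Q\|W\tau_3\|^2$. The argument of Step~3c then applies unchanged, since the maximal $|T_{ij}|$ forces $|w_{ij}|\ge w$. This yields $\dot V\le 2\lambda\|\tilde T\|_F^3-2\mu_Q w^2\|\tilde T\|_F^6/(|\tilde\cE_+|)^3$, the factor $\delta$ from before no longer being needed. The same first-exit contradiction then bounds $\|\tilde T\|_F$.

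\emph{Steps 4 to 6 are then verbatim.} Bounded $T_{ij}$ gives the strict $\varepsilon$-margins \eqref{eq:eps-xi-xj} and hence boundedness of $\xi^\star$. Boundedness of $\dot x,\dots,x^{(r-1)}$ and of $\xi^{(k)}$ follows by the induction of Step~4, using boundedness of $x_L^{(k)}$ and $\varphi_{ij}^{(k)}$. The invariance argument for $\|e_{i,r-1}\|\le\varepsilon_i\theta_i$ bounds $u_i$, and compactness of the closure of the graph in $\cD$ forces $\tau=\infty$. Boundedness of $x$ itself, rather than only of the relative positions, comes from the leader edges: every agent is connected through the rigid graph to $L$, all edge lengths are bounded, and $x_L$ is bounded.

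We expect the main obstacle to be the uniform positive lower bound on $\lambda_{\min}(Q(\tilde x)Q(\tilde x)^\top)$ along the whole trajectory. This requires keeping the configuration inside the region where rigidity holds using only edge-length information, which hinges on the minimal-rigidity local uniqueness and on the choice of $\zeta$'s relative to $\delta$.
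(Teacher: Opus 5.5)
Your proposal follows the paper's proof: everything is inherited from the tree case except the cross term in Step~3b, which you rewrite via $\xi^\star=\tilde R(\tilde x)^\top W\tau_3$ and $\xi_L^\star=0$ as $-2\|\xi^\star\|^2=-2\|Q(\tilde x)^\top W\tau_3\|^2$ and bound using a uniform smallest-eigenvalue estimate, after which Step~3c goes through without the factor $\delta$. You also point out that \eqref{eq:BoundEigenvalue} controls $RR^\top$, whereas the relevant matrix is $\tilde R\tilde R^\top=QQ^\top\preceq RR^\top$, so the constant must come from the column-deletion lemma together with Lemma~\ref{Lem:EigenvalueConti} applied to $Q$; this correctly repairs a step where the paper uses $\mu$ directly, while keeping $\tilde x(t)$ in the $\delta$-neighbourhood of $\tilde x^\star$ modulo Euclidean motions is left to the choice of $\zeta_{\text{min},ij},\zeta_{\text{max},ij}$ in your argument just as in the paper's.
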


\begin{proof}
The argument is identical to the proof of Theorem~\ref{thm:tree}, except for the estimate derived in Step~3b. We can derive a similar estimate for $\dot{V}$, utilizing equation~\eqref{eq:dotTij} for each $(i,j) \in \tilde{\cE}_+$ and the definition of $w_{ij}(t,x_i,x_j)$ in the first step, and $\dot{x}_i - \dot{x}_j =  (\xi_i^{\star} - \xi_j^{\star}) +  (\dot{x}_i - \xi_i^{\star}) + (\xi_j^{\star} - \dot{x}_j)$ and $\dot{x}_i - \dot{x}_L  = \xi_i^{\star} + (\dot{x}_i - \xi_1^{\star}) - \dot{x}_L$  as well as the estimate~\eqref{Eq:LambdaGraph} in the second step,
\begin{align*}
\dot{V}(t) &= 2\!\! \sum_{(i,j) \in \tilde{\cE}_+}\!\! T_{ij}(t,x_i(t),x_j(t))^3  \Big( - w_{ij}(t,x_i(t),x_j(t)) (x_i(t) - x_j(t))^{\top} (\dot{x}_i(t) - \dot{x}_j(t))\\
& \quad  + (d_{ij} + \varphi_{ij}(t) \zeta_{\text{max},ij} ) \zeta_{\text{max},ij} \dot{\varphi}_{ij}(t) (\Norm{x_i(t) - x_j(t)}^2 -(d_{ij} - \varphi_{ij}(t) \zeta_{\text{min},ij})^2) \\
& \quad + (\varphi_{ij}(t) \zeta_{\text{min},ij}-d_{ij} ) \zeta_{\text{min},ij} \dot{\varphi}_{ij}(t) (\Norm{x_i(t) - x_j(t)}^2 -(d_{ij} + \varphi_{ij}(t) \zeta_{\text{max},ij})^2)\Big) \\
&\leq -2 \sum_{(i,j) \in \tilde{\cE}_+} T_{ij}(t,x_i(t),x_j(t))^3 w_{ij}(t,x_i(t),x_j(t)) (x_i(t) - x_j(t))^{\top} (\xi_i^{\star}(t) - \xi_j^{\star}(t))\\
&\quad  - 2 \sum_{(i,j) \in \tilde{\cE}_+} T_{ij}(t,x_i(t),x_j(t))^3 \lambda_{ij},
\end{align*}
where we recall that $T_{ij}(t,x_i(t),x_j(t))<0$. 

Observe that by $\xi_L^{\star} = 0$ and the structure of the rigidity matrix, the entries of the vector $R(\tilde{x})\xi^{\star}$ are given by
\begin{equation} \label{eq:StructureRxi}
\begin{aligned}
\left(R(\tilde{x})\xi^{\star}\right)_{(i,j)} &= \left(\tilde{R}(\tilde{x})\xi^{\star}\right)_{(i,j)} = \langle x_i - x_j, \xi_i^{\star} \rangle + \langle -(x_i - x_j) , \xi_j^{\star} \rangle = \langle x_i-x_j, \xi_i^{\star} - \xi_j^{\star} \rangle,
\end{aligned}
\end{equation}
for all $(i,j) \in \tilde{\cE}_+$. Further, observe that 
\begin{equation} \label{eq:CompareNorm}
- 2 \sum_{(i,j) \in \tilde{\cE}_+} T_{ij}(t,x_i(t),x_j(t))^3 \lambda_{ij}  \leq 2\lambda V(t)^{\frac{3}{2}},
\end{equation}
since $\Vert z\Vert_3 \leq \Vert z\Vert_2$, where $\lambda$ is defined as in the previous proof.

Then the previous upper bound can be written as
\begin{align*}
&\dot{V}(t) \leq -2 \sum_{(i,j) \in \tilde{\cE}_+} T_{ij}(t,x_i(t),x_j(t))^3 w_{ij}(t,x_i(t),x_j(t)) (x_i(t) - x_j(t))^{\top} (\xi_i^{\star}(t) - \xi_j^{\star}(t))\\
&\qquad  - 2 \sum_{(i,j) \in \tilde{\cE}_+} T_{ij}(t,x_i(t),x_j(t))^3 \lambda_{ij} \\
&\overset{\eqref{eq:CompareNorm}}{\leq} 2 \lambda V(t)^{\frac{3}{2}} \! -\!\!\!\! \sum_{(i,j) \in \tilde{\cE}_+} \!\!\! T_{ij}(t,x_i(t),x_j(t))^3w_{ij}(t,x_i(t),x_j(t)) \langle x_i(t)-x_j(t), \xi_i^{\star}(t) - \xi_j^{\star}(t) \rangle \\
&\overset{\eqref{eq:StructureRxi}}{=} 2 \lambda V(t)^{\frac{3}{2}} - \begin{pmatrix} (T_{ij}(t,x_i,x_j)^{3})_{(i,j) \in \tilde{\cE}_+} \end{pmatrix}^{\top} W(t,\tilde{x}) \tilde{R}(\tilde{x}) \xi^{\star} \\
&\overset{\eqref{eq:xi star}}{=} 2 \lambda V(t)^{\frac{3}{2}} - \xi^{\star\top}\xi^{\star} \\
&\leq 2 \lambda V(t)^{\frac{3}{2}} - \lambda_{\text{min}}(\tilde{R}(\tilde{x}) \tilde{R}(\tilde{x})^{\top})\left\Vert W(t,\tilde{x})\begin{pmatrix} (T_{ij}(t,x_i,x_j)^{3})_{(i,j) \in \tilde{\cE}_+} \end{pmatrix} \right\Vert^2 \\
&\overset{\eqref{eq:BoundEigenvalue}}{\leq} 2 \lambda V(t)^{\frac{3}{2}} - \mu \left\Vert W(t,\tilde{x})\begin{pmatrix} (T_{ij}(t,x_i,x_j)^{3})_{(i,j) \in \tilde{\cE}_+} \end{pmatrix} \right\Vert^2,
\end{align*}
where, in the last step, we use that $\Vert x_i(t) - x_j(t) \Vert_{\R^n} < \delta$, for all $t \geq 0$ and every $(i,j) \in \tilde{\cE}_+$, by choice of the funnel parameters $\zeta_{\text{min},ij}$ and $\zeta_{\text{max},ij}$. 

We can use this estimate to show the boundness of $V(t)$.

Define
$$  \hat T := \max \left\{ V(0), \left( \frac{2 \lambda \vert \tilde{\cE}_+\vert^3}{\mu w^2} \right)^{\frac{2}{3}}, \sigma \right\},$$
where $w,\sigma$ are defined as in the previous proof.

The same argument as in the previous proof shows that 
\begin{align} 
    &\sum_{(k,l)\in\tilde\cE_+}\!\! T_{kl}(t,x_k(t),x_l(t))^6 w_{kl}(t, x_k(t),x_l(t))^2 \ge T_{ij}(t,x_i(t),x_j(t))^6 w_{ij}(t)^2\notag \\
    &\qquad \ge w^2 \max\setdef{T_{kl}(t,x_k(t),x_l(t))^6}{(k,l)\in\tilde\cE_+ \!}\ge w^2 \left(\frac{V(t) }{\vert \tilde{\cE}_+\vert}\right)^3.\label{eq:est-sumTij-wij}
\end{align}
Therefore, we may estimate
\begin{align*}
\dot{V}(t) &\leq 2 \lambda V(t)^{\frac{3}{2}} - \mu \left\Vert W(t,\tilde{x})\tilde{T}(t,\tilde{x}) \right\Vert^2 \overset{\eqref{eq:est-sumTij-wij}}{\leq} 2 \lambda V(t)^{\frac{3}{2}} - \mu w^2 \left(\frac{V(t) }{\vert \tilde{\cE}_+\vert}\right)^3\\
&\leq V(t)^{\frac{3}{2}} \left(2 \lambda - \frac{\mu w^2}{\vert \tilde{\cE}_+ \vert^3} \hat{T}^{\frac{3}{2}} \right) \leq 0
\end{align*}
for all $t \in [0,\omega)$ where $V(t) > \hat T$. Integration shows that $\hat T$ remains bounded. 

The remainder of the proof is unchanged.
\end{proof}

\subsection{Control of Formation Centroid}\label{sec:rigid2}

Instead of using a virtual leader with a prescribed trajectory to control the absolute position of the desired formation defined by the undirected communication graph $G = (V,\cE)$ and the minimally and infinitesimally rigid framework $\cF = (G,x^{\star})$, $x^{\star} \in \R^{mn}$, in space, we can extend the previous result to the case of a formation with a commonly known, bounded reference velocity $\dot{x}_{c,\text{ref}} \in L^{\infty}(\R_{\geq 0},\R^n)$ for the centroid, which is given by

$$ \dot{x}_{c}(t) \coloneq \frac{1}{\vert V \vert} \sum_{i \in V} \dot{x}_i(t). $$

For each edge $(i,j) \in \cE_+$, choose the desired distances $d_{ij}>0$, funnel functions $\varphi_{ij}$, parameters $\zeta_{\text{min},ij}, \zeta_{\text{max},ij} > 0$ as previously, and define suitable constants $\delta, \mu$ satisfying~\eqref{eq:BoundEigenvalue}. 

Choose the new reference signal 
\begin{equation} \label{eq:NewXi}
\hat{\xi}^{\star}(t,x) = \underbrace{R(x)^{\top} W(t,x) T(t,x)}_{\xi^{\star}(t,x) \coloneq} + (1,\hdots,1)^{\top} \otimes \dot{x}_{c,\text{ref}}(t),
\end{equation}
where $W(t,x) = \operatorname{diag}\left((w_{ij}(t)_{(i,j) \in\cE_+}\right)$, and $T(t,x) = \left((T_{ij}(t,x_i(t),x_j(t)^3))_{(i,j) \in \cE_+}\right)$ is a column vector. Use this reference signal for the same construction of error variables~\eqref{eq:errors-eij} and input signals~\eqref{Controller} as previously.

\begin{theorem}
Consider a system~\eqref{eq:MAS-order-r} with $r\ge 2$ and $f$ bounded, parameters as chosen above, error variables as defined in~\eqref{eq:errors-eij} and initial conditions $((x_1^{0\top}, \hdots, x_m^{0\top})^{\top}, \hdots, (x_1^{(r-1)0\top},\hdots,x_m^{(r-1)0\top})^{\top}) \in \left(\R^{mn}\right)^r$ that satisfy
\begin{enumerate}[1)]
    \item $\forall\, (i,j)\in\tilde{\cE_+}:\ -\zeta_{\text{min},ij} < \Norm{x_i^0 - x_j^0} - d_{ij} < \zeta_{\text{max},ij}$,
    \item $\forall\, i\in V:\ \Norm{ e_{i,r-1}(0) } < \theta_i$.
\end{enumerate}
Then the closed-loop system consisting of~\eqref{eq:MAS-order-r} under the control~\eqref{Controller} constructed from the adjusted reference signal~\eqref{eq:NewXi}, has a unique global solution $x:\R_{\ge 0}\to\R^{mn}$ that satisfies 
\begin{enumerate}[a)]
\item $x,\hdots, x^{(r-1)}$ and $u_i$ are bounded for all $i\in V$,
\item $\forall\, (i,j)\in\tilde{\cE_+}\ \forall\, t\ge0:\ -\varphi_{ij}(t) \zeta_{\text{min},ij} < \Norm{x_i(t) - x_j(t)} - d_{ij} < \varphi_{ij}(t) \zeta_{\text{max},ij}$,
\item $\Norm{\dot{x}_{c,\text{ref}}(t) - \dot{x}_{c}(t)} < \frac{\sum_{i \in V} \theta_i^{(1)}}{\vert V \vert}$.
\end{enumerate}
\end{theorem}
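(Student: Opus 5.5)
We follow the structure of the proofs of Theorem~\ref{thm:tree} and Theorem~\ref{thm:rigid}, with the reference signal $\xi^\star$ replaced by $\hat\xi^\star$ from~\eqref{eq:NewXi}. Steps~1 and~2 carry over verbatim. In Step~1, the functions $G_{i,j}$ are now built from $\hat\xi_i^\star$, which contains the additional term $\dot x_{c,\text{ref}}$. Here we must ensure that enough derivatives of $\dot x_{c,\text{ref}}$ exist. We read the statement as requiring $\dot x_{c,\text{ref}}\in C^{r-2}$ with bounded derivatives up to order $r-1$, in analogy with the leader $x_L$. Alternatively, since $f$ is assumed bounded, it may suffice to treat $\dot x_{c,\text{ref}}^{(r-2)}$ via Carathéodory solutions. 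Existence of a unique maximal solution on the analogous open set $\cD$ then follows from~\cite[\S\,10, Thm.~XX]{Walt98}, and Lemma~\ref{lem:epsi} gives $\|e_{i,j}\|<\theta_i^{(j)}$ together with boundedness of $e_{i,1},\dots,e_{i,1}^{(r-2)}$ as in Step~2a.

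The key modification is Step~3b. We again take $V=\tfrac12\sum_{(i,j)\in\cE_+}T_{ij}^2$ and use the decomposition
\[
\dot x_i-\dot x_j = (\hat\xi_i^\star-\hat\xi_j^\star) + (\dot x_i-\hat\xi_i^\star) + (\hat\xi_j^\star-\dot x_j).
\]
Since the added term $(1,\dots,1)^\top\otimes\dot x_{c,\text{ref}}$ is identical for all agents, it cancels in the difference, so $\hat\xi_i^\star-\hat\xi_j^\star=\xi_i^\star-\xi_j^\star$. The two remaining error terms are bounded by $\theta_i^{(1)}+\theta_j^{(1)}$, which yields the constants $\lambda_{ij}$ exactly as in~\eqref{Eq:LambdaGraph}. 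Equivalently, $(1,\dots,1)^\top\otimes v\in\ker R(x)$ because translations lie in the kernel of the rigidity matrix. Hence
\[
R(x)\hat\xi^\star=R(x)\xi^\star,
\]
and~\eqref{eq:StructureRxi} holds with $R(x)$ itself; no altered matrix $\tilde R$ is needed since there is no leader. Then, since $\xi^\star=R^\top W T$ with $T$ the vector of cubes $T_{ij}^3$,
\[
\dot V \le 2\lambda V^{3/2} - \xi^{\star\top}\xi^\star \le 2\lambda V^{3/2}-\mu\|WT\|^2 .
\]
This uses Lemma~\ref{Lem:PosDef} and~\eqref{eq:BoundEigenvalue}, valid because the funnel bounds keep the framework within the $\delta$-neighbourhood. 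The contradiction argument with $\hat T$, $w$ and $\sigma$ from the proof of Theorem~\ref{thm:rigid} then bounds $V$, and as in Step~3d all $T_{ij}$ and $\xi_{ij}$ are bounded with the edge lengths uniformly inside the funnels. This gives b).

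Steps~4--6 then proceed as before. From $\|\dot x_i-\hat\xi_i^\star\|<\theta_i^{(1)}$ and the boundedness of $\xi^\star$ and $\dot x_{c,\text{ref}}$ we obtain boundedness of $\dot x_i$. Inductively, $x^{(k)}$ and $\hat\xi^{\star(k)}$ are bounded for $k\le r-1$, since the derivatives of $\dot x_{c,\text{ref}}$ are bounded by assumption. The barrier argument of Step~5 bounds $u_i$, and the compactness argument gives $\tau=\infty$.

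Boundedness of $x$ itself is the main obstacle. Without a leader, relative positions are bounded, but the centroid moves with $\dot x_c$, which is only close to the bounded $\dot x_{c,\text{ref}}$. So $x_c(t)$ may grow like $t$ and the positions need not be bounded. Two points help here. First, $f$ is assumed bounded, which is presumably why this hypothesis appears: Step~5 then does not need a compact set containing $x(t)$. Second, the solution $\tau=\infty$ only needs to avoid the boundary of $\cD$ in finite time, which linear growth permits. So we will prove boundedness of $x-\mathbf 1\otimes x_c$, of $\dot x,\dots,x^{(r-1)}$ and of $u$, and interpret a) for $x$ accordingly, namely relative positions bounded. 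All terms $T_{ij},w_{ij},R(x)$ depend only on differences, so Step~4 goes through.

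Finally, for c), note that $\sum_i\xi_i^\star=(\mathbf 1^\top\otimes I_n)R(x)^\top WT=0$, since $R(x)(\mathbf 1\otimes I_n)=0$. Hence
\[
\dot x_c-\dot x_{c,\text{ref}}=\frac1{|V|}\sum_i(\dot x_i-\hat\xi_i^\star)=\frac1{|V|}\sum_i e_{i,1},
\]
and the triangle inequality with $\|e_{i,1}\|<\theta_i^{(1)}$ gives c).
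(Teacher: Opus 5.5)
Your argument follows the paper's proof: you rerun the proof of Theorem~\ref{thm:rigid} with $\hat\xi^\star$, noting that the common term $(1,\ldots,1)^\top\otimes\dot x_{c,\text{ref}}$ cancels in $\hat\xi_i^\star-\hat\xi_j^\star$ (it lies in $\ker R(x)$), so the Lyapunov estimate is unchanged; you use boundedness of $f$ in Step~5 instead of a compact set containing $x(t)$; and you obtain c) from $\sum_{i\in V}\xi_i^\star=0$. Your two departures from the literal statement are corrections, not gaps. First, c) itself shows that a constant $\dot x_{c,\text{ref}}=v$ with $\|v\|>\sum_{i\in V}\theta_i^{(1)}/|V|$ makes $x_c$ grow linearly, so only the relative positions (with $\dot x,\ldots,x^{(r-1)}$ and $u$) can be bounded, which is exactly why the paper needs $f$ bounded; second, $\dot x_{c,\text{ref}}\in L^\infty$ must be strengthened for the functions $G_{i,j}$ and the term $(\hat\xi_i^\star)^{(r-1)}$ in Step~5 to make sense.
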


\begin{proof}
Observe that, since $\dot{x}_{c,\text{ref}}$ is bounded by assumption, the error $\dot{x}_i(t) - \hat{\xi}_i^{\star}(t,x)$ remains bounded by the choice of the controller. Hence, the proof of the previous theorem carries over, with straightforward adjustments to the relevant constants, and replacement of Step~4 by the boundedness of $f$.
Further, we can determine the error of the velocity of the centroid by using $\hat{\xi}_i^{\star}(t,x), \xi_i^{\star}(t,x)$ as defined in~\eqref{eq:NewXi} via
\begin{align*}
\dot{x}_{c,\text{ref}} - \frac{1}{\vert V \vert} \sum_{i\in V} \dot{x}_i(t) &= \dot{x}_{c,\text{ref}} - \frac{1}{\vert V \vert} \sum_{i\in V}\left(\dot{x}_i(t) - \hat{\xi}^{\star}_i(t) + \underbrace{\xi_i^{\star}(t) + \dot{x}_{c,\text{ref}}}_{=\hat{\xi}^{\star}_i(t)} \right) \\
&= - \frac{1}{\vert V \vert} \sum_{i\in V} \left(\dot{x}_i(t) - \hat{\xi}^{\star}_i(t) + \xi_i^{\star}(t) \right)\\
&= - \frac{1}{\vert V \vert} \left( \sum_{i\in V} \underbrace{(\dot{x}_i - \hat{\xi}_i^{\star})}_{\Vert \cdot \Vert < \theta_i^{(1)}} + \underbrace{\sum_{i\in V} \xi_i^{\star}}_{=0} \right),
\end{align*}
where we use that $ \sum_{i\in V} \xi_i^{\star} = 0$, since the row sums of $R(x)$ are $0$ by construction of the rigidity matrix, as seen in~\eqref{eq:RowRigidity}. This implies $\Norm{\dot{x}_{c,\text{ref}}(t) - \dot{x}_{c}(t)} < \frac{\sum_{i \in V} \theta_i^{(1)}}{\vert V \vert}$.
\end{proof}

\section{Simulations} \label{sec:Simulations}
\setlength{\belowcaptionskip}{0pt}
\begin{figure}[t]
    \centering

    \begin{subfigure}{0.45\textwidth}
        \centering
        \includegraphics[width=0.3\textwidth]{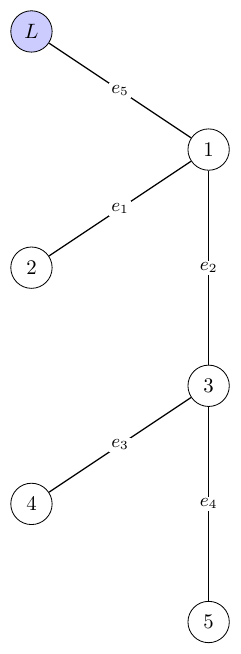}
    \caption{Communication graph}
    \end{subfigure}
    \hfill
    \begin{subfigure}{0.45\textwidth}
        \centering
        \includegraphics[width=\textwidth]{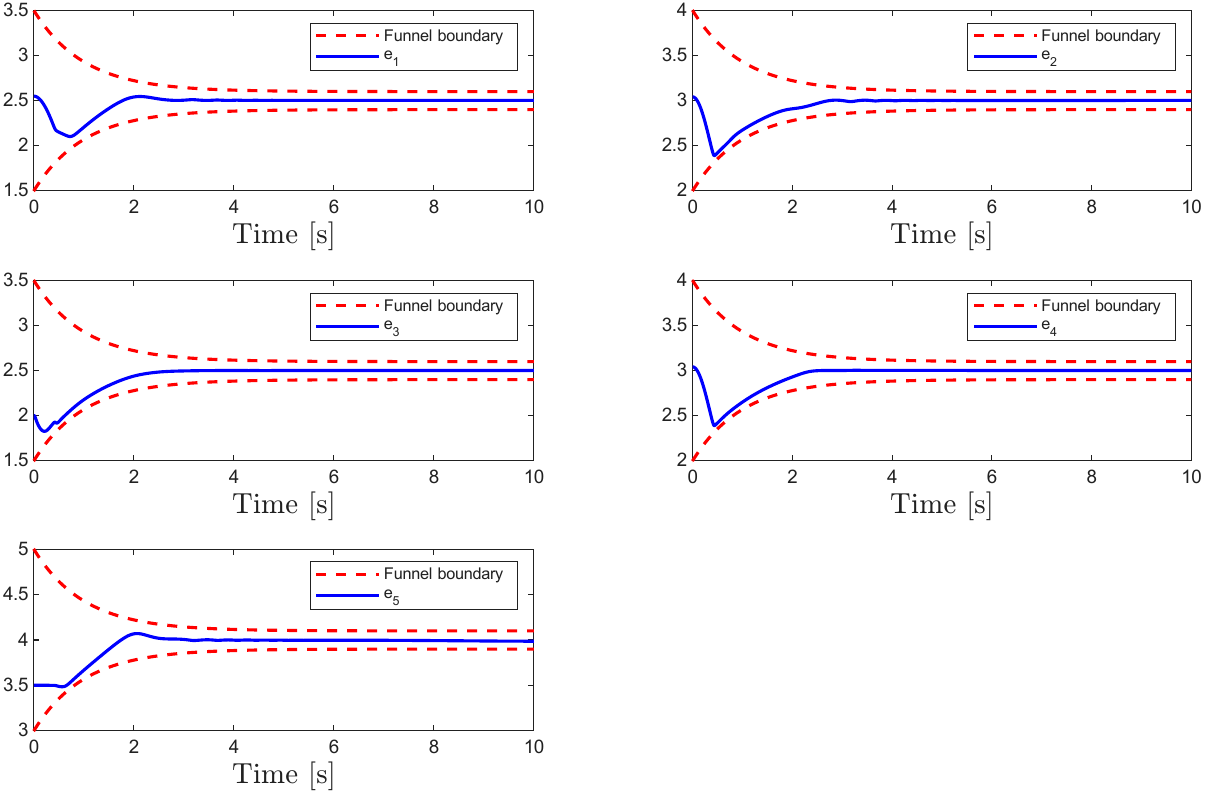}
    \caption{Distances along edges}
    \end{subfigure}


    \begin{subfigure}{0.45\textwidth}
        \centering
        \includegraphics[width=\textwidth]{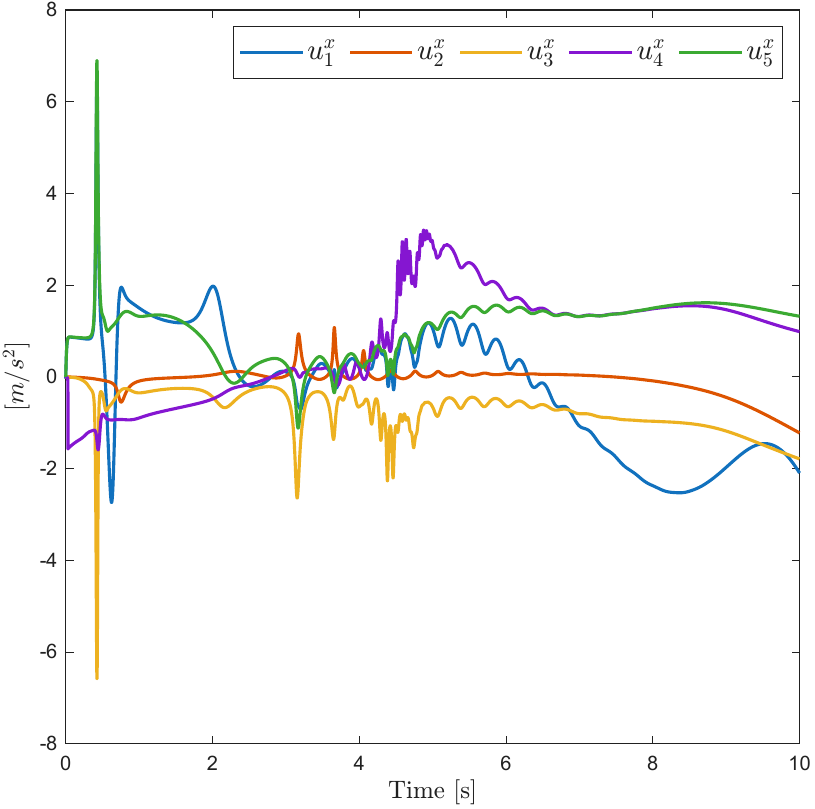}
  \caption{Inputs in x direction}
    \end{subfigure}
    \hfill
    \begin{subfigure}{0.45\textwidth}
        \centering
        \includegraphics[width=\textwidth]{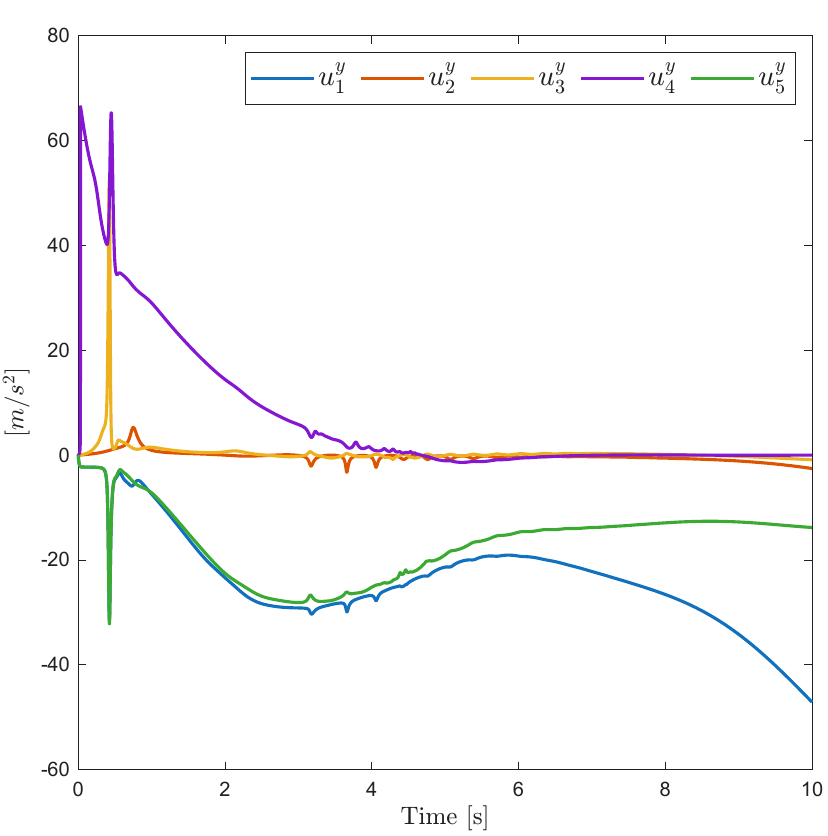}
	  \caption{Inputs in y direction}
    \end{subfigure}
\setlength{\belowcaptionskip}{10pt}
\caption{Simulation results for a graph with tree structure.}
\label{Sim:Tree}
\end{figure}

Simulation results for a tree graph on $[0,50]$ are shown in Fig.~\ref{Sim:Tree}, where $d_{12} = d_{34} = 2.5$, $d_{13} = d_{35} = 3$ and $d_{L} = 4$, with the initial positions $x_1^0 = (0.5,-1), x_2^0 = (0,1.5), x_3^0 = (0,2), x_4^0 = (0,4), x_5^0 = (0.5,-1)$, and all initial velocities are zero. The tuning parameters $\theta_1 = 0.1$, $\theta_2 = \theta_3 = \theta_4 = 2$ and $\theta_5 = 0.1$ are selected, and the reference trajectory is a circular trajectory
$$  y_{\text{ref}}(t) = \begin{pmatrix} 2+2\cos(25t) \\ -1+2\sin(25t) \end{pmatrix}.$$

The transient behavior is bounded by the function $\phi(t) = 0.9e^{-t} + 0.1$ for each edge, and the system is determined by
$$ \ddot{x}(t) = f(t,x) + u(t),$$
for each agent, where $x = (x_1,x_2) \in \mathbb{R}^2$ describes the position of the agent, and
$$f(t,x)=\begin{pmatrix} -x_1 + 0.1x_2^2+\sin(x_1x_2)+0.05(1+x_1^2)\sin(2t) \\ -(1+x_1^2)x_2-x_2^3+0.05(1+x_2^2)\sin(2t)  \end{pmatrix}.$$

\setlength{\belowcaptionskip}{0pt}
\begin{figure}[t]
    \centering

    \begin{subfigure}{0.45\textwidth}
        \centering
        \includegraphics[width=0.3\textwidth]{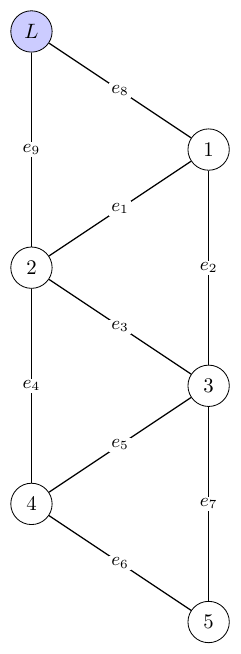}
    \caption{Communication graph}
    \end{subfigure}
    \hfill
    \begin{subfigure}{0.45\textwidth}
        \centering
        \includegraphics[width=\textwidth]{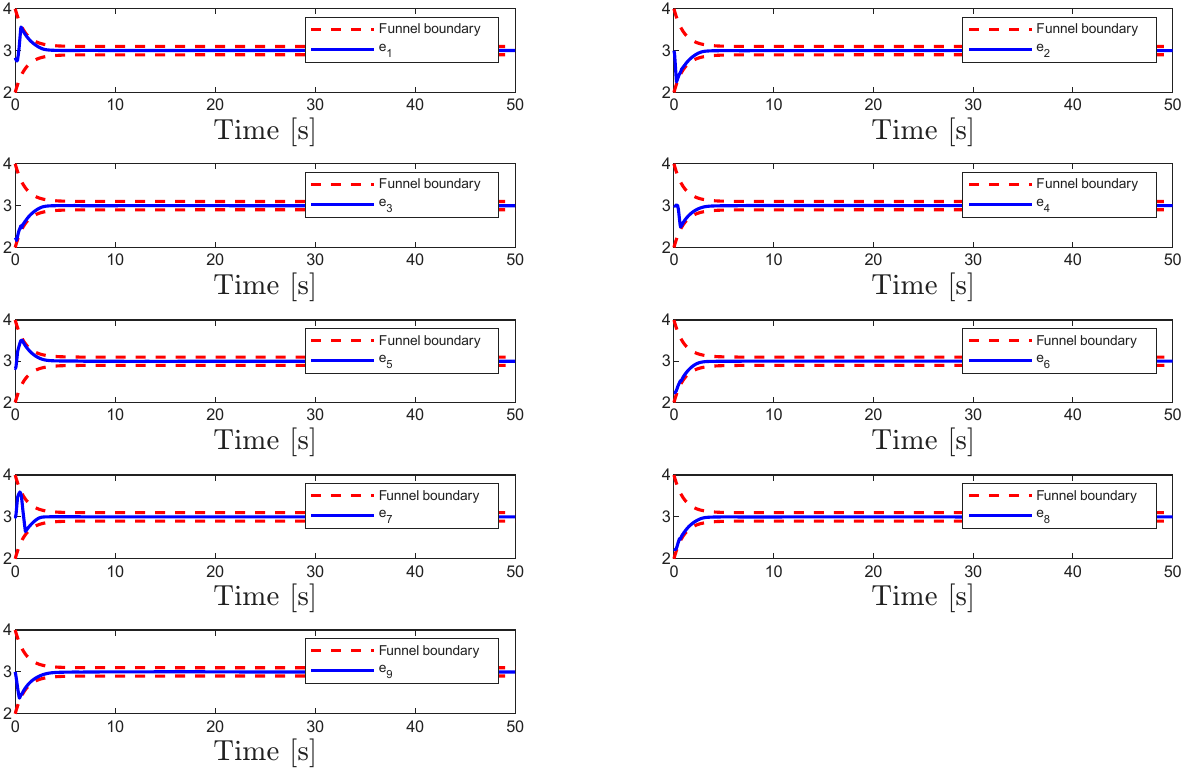}
    \caption{Distances along edges}
    \end{subfigure}

    \vspace{1cm}

    \begin{subfigure}{0.45\textwidth}
        \centering
        \includegraphics[width=\textwidth]{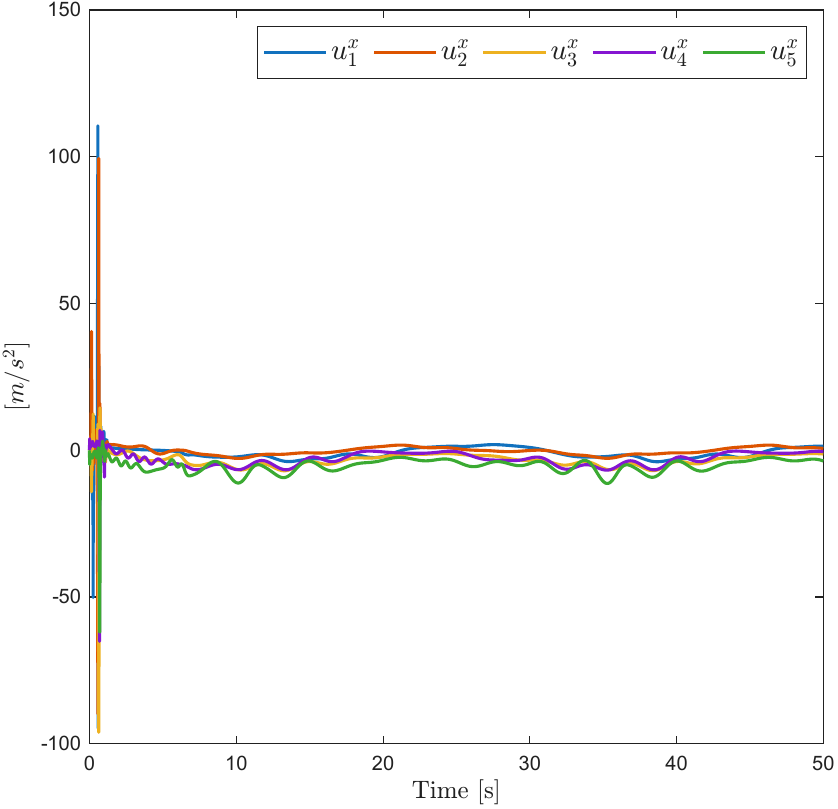}
  \caption{Inputs in x direction}
    \end{subfigure}
    \hfill
    \begin{subfigure}{0.45\textwidth}
        \centering
        \includegraphics[width=\textwidth]{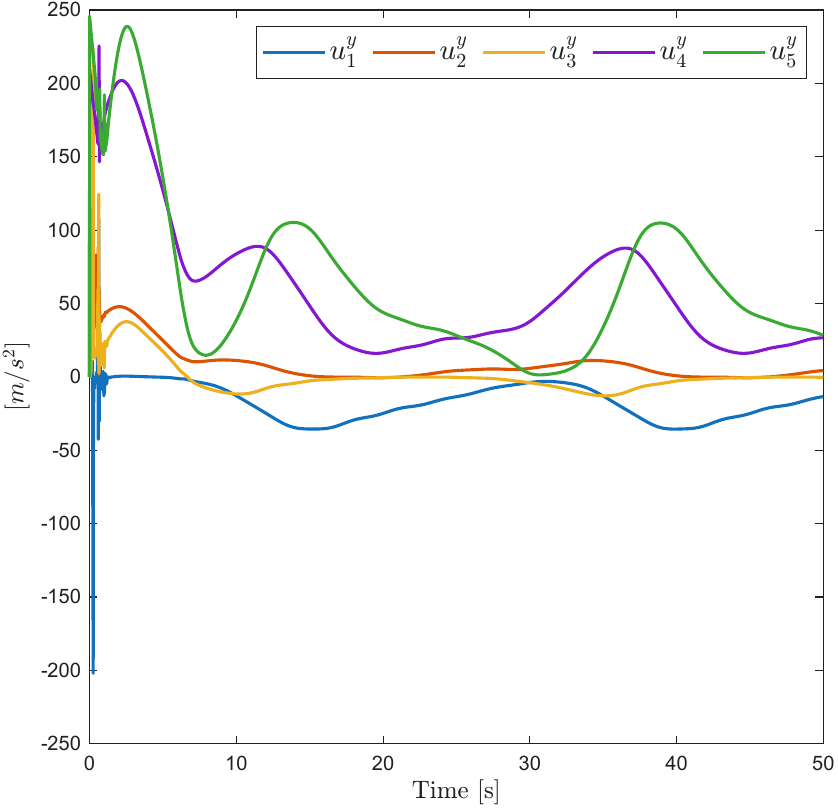}
	  \caption{Inputs in y direction}
    \end{subfigure}
\captionsetup{skip=0pt}
\caption{Simulation results for a rigid framework.}
\label{Sim:Rigid}
\end{figure}

Simulation results for a rigid graph on $[0,50]$ are shown in Fig.~\ref{Sim:Rigid}, where all distances $d_{ij} = 3$, with the initial positions
$x_1^0 = (2,0), x_2^0 = (4,2), x_3^0 = (2,3), x_4^0 = (4,5), x_5^0 = (2,6)$, and all initial velocities are zero. The tuning parameters $\theta_1 = 0.5$, $\theta_2 = \theta_4 = \theta_5 = 1$ and $\theta_3 = 4$ are selected, and the functions $y_{\text{ref}}, \varphi, f$ are as defined in the previous case.

\section{Conclusion} \label{sec:Conclusion}

In this work, we developed a distributed model-free control law for distance-based formation control over tree and minimally and infinitesimally rigid communication graphs, while achieving prescribed-performance tracking of a virtual leader. The proposed framework also addresses centroid reference tracking, thereby broadening its applicability to coordinated multi-agent systems. Future research will focus on extending the framework to time-varying communication topologies and incorporating collision avoidance mechanisms to further enhance its applicability to dynamic and safety-critical environments.

\small
\bibliographystyle{plain}
\bibliography{lit}

\end{document}

%% file: commands.tex
\newcommand{\nl}{\left\|}
\newcommand{\nr}{\right\|}

\newcommand{\Norm}[2][ ]{\nl #2 \nr_{#1}}

\newcommand{\R}{\mathds{R}}

\renewcommand{\phi}{\varphi}

\newcommand{\cD}{\mathcal{D}}
\newcommand{\cE}{\mathcal{E}}
\newcommand{\cF}{\mathcal{F}}

\newcommand{\setdef}[2]{\left\{\, #1 \left|\, \vphantom{#1} #2\right.\right\}}

\makeatletter
\newcommand{\Itemlabel}[2]{#2\def\@currentlabel{#2}\label{#1}}
\makeatother

\makeatletter
\newcommand\setcurrentname[1]{\def\@currentlabelname{#1}}
\makeatother

\newcommand{\nocontentsline}[3]{}
\newcommand{\tocless}[2]{\bgroup\let\addcontentsline=\nocontentsline#1{#2}\egroup}
\newcommand{\toclesslab}[3]{\bgroup\let\addcontentsline=\nocontentsline#1{#2\label{#3}}\egroup} 
